\newtheorem{thm}{Theorem}[section]
\newtheorem{prop}[thm]{Proposition}
\theoremstyle{definition}
\theoremstyle{definition}
\theoremstyle{definition}
\newtheorem{defn}[thm]{Definition}
\theoremstyle{remark}
\newtheorem{rem}[thm]{Remark}
\newtheorem{Example}[thm]{Example}
\numberwithin{equation}{section}
\newcommand{\T}{\mathbb{T}}
\newcommand{\R}{\mathbb R}
\newcommand{\Z}{\mathbb Z}
\def\1{\mathbb I}
\def\a{\alpha}
\def\b{\beta}
\def\d{\delta}
\def\g{\gamma}
\def\l{\lambda}
\def\o{\omega}
\def\eps{\varepsilon}
\def\ov{\overline}
\begin{document}
\title[]{Homogenization of  monotone   systems of Hamilton-Jacobi equations}
\author{Fabio Camilli, Olivier Ley \and Paola Loreti  }
\address{Dip. di Matematica Pura e Applicata, Univ. dell'Aquila, loc. Monteluco di Roio, 67040  l'Aquila,
Italy} \email{camilli@ing.univaq.it}
\address{Universit\'{e} Fran\c{c}ois-Rabelais, Tours.
Laboratoire de Math\'{e}matiques et Physique Th\'{e}\-ori\-que, CNRS UMR 6083,
F\'ed\'eration de Recherche Denis Poisson (FR 2964).
Facult\'{e} des Sciences et Techniques,
Parc de Grandmont, F-37200 TOURS, France} \email{ley@lmpt.univ-tours.fr}

\address{Dip. di Metodi e Modelli Matematici per le Scienze Applicate,
Facolt\`a  di Ingegneria,
 Universit\`a di Roma ``La Sapienza", via Scarpa 16, 00161 Roma, Italy}
\email{loreti@dmmm.uniroma1.it}

\begin{abstract}
In this paper we study  homogenization   for a class of
 monotone systems of first-order time-dependent periodic Hamilton-Jacobi equations.
We characterize  the Hamiltonians of the limit problem by appropriate cell problems. Hence  we
show the uniform convergence of the solution  of the oscillating systems to
the bounded
uniformly continuous solution of the
homogenized system.
\end{abstract}
\subjclass[2000]{Primary 35B27; Secondary 49L25, 35K45}
\keywords{Systems of Hamilton-Jacobi equations, viscosity solutions, homogenization.}
\date{\today}
\maketitle
\section{Introduction}

In this paper we study the behavior as $\eps\to 0$ of the monotone system
of Hamilton-Jacobi equations
\begin{equation}\label{HJi}
\left\{
  \begin{array}{ll}
\frac{\partial u^\eps_i}{\partial t}+  H_i({x},\frac{x}{\eps},u^\eps, Du^\eps_i )=0
   & (x,t)\in  \R^N\times (0,T],   \\[5pt]
  u^\eps_{i}(x,0)=u_{0,i}(x)&x\in \R^N,\ i=1, \cdots M,
  \end{array}
\right.
\end{equation}
where   the Hamiltonians $H_i(x,y,r,p)$,
$i=1,\dots, M$, are periodic in $y,$ coercive in $p$ and
satisfy some uniform continuity properties, see \eqref{H1}.
The $u_{0,i}$'s are bounded uniformly continuous ($BUC$ in short). The
monotonicity condition, see  \eqref{A3},  we assume for
 the system is a standard assumption to obtain  a comparison principle for  \eqref{HJi} (see
\cite{el91}, \cite{ishii92}, \cite{ik91}, \cite{ik91b}).\par The main result
of the paper, see Theorem \ref{HT},  is the convergence of $u^\eps$, as $\eps\to 0$,  to a
$BUC$ function $u=(u_1,\dots,u_M)$ which
solves in viscosity sense the homogenized  system
\begin{eqnarray}\label{sys-homog}
\left\{
  \begin{array}{ll}
\frac{\partial u_i}{\partial t}+  \ov H_i(x,u, Du_i )=0
   & (x,t)\in  \R^N\times (0,T),\  \\[5pt]
  u_{i}(x,0)=u_{0,i}(x)&x\in \R^N\ i=1, \cdots M.
  \end{array}
\right.
\end{eqnarray}
The Hamiltonians $\ov H_i$ of the limit problem, the so-called
effective Hamiltonians, are characterized  by appropriate cell
problems. The comparison principle for \eqref{sys-homog} which provides
existence and uniqueness is not an imediate consequence of the comparison
principle for \eqref{HJi} since the regularity properties we could prove for
the effective Hamiltonians are weaker than those for the initial ones
(compare \eqref{H1} and \eqref{mod-hbarre}).\par
Homogenization of Hamilton-Jacobi equations in the framework of viscosity solution theory was firstly considered in the seminal paper by Lions, Papanicolau and Varadhan \cite{lpv86}.
The proof of the our homogenization result relies on an appropriate modification of the classical perturbed test function method. This
technique   was introduced  in the framework of the   viscosity solutions theory by Evans \cite{evans89} for the case of a   periodic equation.
Then it  has been  adapted to  many different homogenization problems,  see e.g. \cite{ab03}, \cite{ci01}, \cite{concordel96}, \cite{ls03}.
For a complete account of the homogenization theory in the periodic case we refer to \cite{ab03}.\par
Concerning the homogenization of  systems of Hamilton-Jacobi
equations we refer to \cite{evans89}, \cite{shimano06}. In these papers, homogenization of weakly coupled systems, i.e. systems with a linear coupling, was considered together with a penalization of the coupling term  of order $\eps^{-1}$.
 Because of the penalization,  the limit problem is a single Hamilton-Jacobi equation and
 all the components of the solution of the perturbed system converge  to the unique solution of this equation.\par
We consider the more general class of monotone systems, which in particular includes the weakly coupled ones. Moreover,   since we do not penalize  the coupling term, the homogenized problem is still a   system
of Hamilton-Jacobi equations and  the perturbed test function method has to be adapted to this situation.\par
In Section \ref{Ex}, we  discuss in more details  the homogenization of   the  weakly coupled systems. In particular   we show  that the homogenized
system is not necessarily weakly coupled but only monotone. For a particular 1-dimensional weakly coupled eikonal system, we  give an explicit
formula for the effective Hamiltonians.\par
The plan of the paper is the
following.\par In Section \ref{prelim} we describe our assumptions and definitions. In Section \ref{system}
we study the system   \eqref{HJi} for $\eps>0$. In Section
\ref{EH}, we define the effective Hamiltonians and we study their
properties. In Section \ref{Hom} we prove the homogenization
result. In Section \ref{Ex} we study some examples and in particular the
weakly coupled systems. Finally in the Appendix  we prove a
comparison theorem for \eqref{HJi}.\\[1mm]
\emph{Notation:}
We will use the following norm
\begin{eqnarray*}
    |f|_\infty=\mathop{\rm ess \,sup}_{x\in\R^N}|f(x)| \quad
\end{eqnarray*}
and $B_k(x,R)$ denotes the $k$-dimensional ball of center $x\in\R^N$ and radius $R>0.$

\section{Assumptions and preliminary results}\label{prelim}
We consider the monotone system of Hamilton-Jacobi equations
\begin{equation}\label{HJS}
\left\{
\begin{array}{ll}
\displaystyle{\frac{\partial u^\eps_i}{\partial t}}+  H_i\left({x},\displaystyle{\frac{x}{\eps}}, u^\eps, Du^\eps_i \right)=0
   & (x,t)\in  \R^N\times (0,T),\ \ i=1, \cdots M, \\[10pt]
  u^\eps_{i}(x,0)=u_{0,i}(x)&x\in \R^N,
  \end{array}
\right.
\end{equation}
where $u^\eps=(u^\eps_1,\dots,u^\eps_M)$ and $u^\eps_i$ is a real valued function defined in $\R^N\times [0,T].$
We assume that the Hamiltonians $H_j:\R^N\times \R^N\times\R^M \times\R^N\to\R$,
$j=1,\dots,M$, are continuous and
satisfy the following assumptions:
\begin{eqnarray}
\label{H1}\left\{
\begin{array}{l}
\text{ i) $H_j(x,y,r,p)$ is $\Z^N$-periodic in $y$ for any $(x,r,p)$;}\\[6pt]
\text{ ii) $H_j(x,y,r,p)$ is coercive in $p$, i.e.}\\
\text{  \hspace*{0.5cm}
      $\displaystyle \lim_{|p|\to+\infty}H_j(x,y,r,p)=+\infty$  uniformly in $(x,y,r)$;}\\[8pt]
\text{ iii) For all $R>0,$
$H_j\in BUC(\R^N\times \R^N\times [-R,R]^M \times B_N(0,R))$;} \\[6pt]
\text{ iv) there exists a modulus of continuity
$\omega$ s.t}\\[3pt]
 \text{\hspace*{0.5cm}$|H_j(x_1, y_1, r,p)-H_j(x_2,y_2,r,p)|\le\o((1+|p|)(|x_1-x_2|+|y_1-y_2|)),$ }\\[3pt]
 \text{\hspace*{0.5cm}for every $x_1$, $x_2$,  $y_1$, $y_2$, $p\in\R^N$ and $r\in \R^M$.}
\end{array}\right.
\end{eqnarray}
Unless otherwise specified  all the periodic functions we consider have period $\T^N=[0,1]^N.$
We also assume the following monotonicity condition
\begin{equation}\label{A3}
    \begin{array}{c}
   \text{If $r$, $s\in\R^M$ and $\displaystyle r_j-s_j=\max_{k=1,\dots,M}\{r_k-s_k\}\ge 0$, then }\\[3pt]
   \text{ for all $x$, $y$, $p\in\R^N$, \ \
   $H_j(x,y,r,p)-H_j(x,y,s,p)\ge 0.$}
    \end{array}
\end{equation}
Concerning the initial datum we assume
\begin{eqnarray}
\text{$u_{0,j}$ is bounded uniformly continuous in $\R^N$ for $j=1,\dots,M$}.\label{H2}
\end{eqnarray}
\begin{Example} \label{weak-exple} \ \\
1. Consider
\begin{eqnarray}\label{exemple-H}
H_j(x,y,r,p)= a_j(x,y)|p|+F_j(r).
\end{eqnarray}
where $a_j\in C(\R^N\times \R^N)$ and $F_j\in C(\R^M)$. If $a_j$ is
$\Z^N$-periodic in $y,$ then
\eqref{H1}i) holds. If there exists $\delta >0$ such that $a_j\geq \delta$ then
\eqref{H1}ii) is satisfied. If $a_j$ is bounded with respect to $x$ then
\eqref{H1}iii) holds (note that $a_j$ is bounded with respect to $y$ since
it is periodic). Finally, we have \eqref{H1}iv)
if, for instance, $a_j$ is Lipschitz continuous with respect to $(x,y).$
The assumption \eqref{A3}  is satisfied if $F_j$ is increasing in $r_j$, decreasing in $r_k$ for $k\neq j$.\\
2. A weakly coupled system is a system of the type
\begin{equation}\label{Ex1wk}
\frac{\partial u^\eps_i}{\partial t}+ H_i\left(x,\frac{x}{\eps},Du^\eps_i\right)
+\sum_{j=1}^M c_{ji}\left(x,\frac{x}{\eps} \right)u_j=0,\qquad i=1,\dots, M.
\end{equation}
 Some assumptions
on $c_{ij}$ to ensure \eqref{A3} are given in  Section \ref{Ex}.
Weakly coupled systems arise in optimal control theory of random evolution processes (see \cite{el91}). Moreover
they are associated to large deviation problems for small random perturbation of random evolution processes (see
\cite{cl07a}, \cite{ef90}).
We will study some specific case of weakly coupled systems in Section \ref{Ex}.
\end{Example}

For a function $u:E\to \R^M$, we say that $u=(u_1,..,u_M)$ is
upper-semicontinuous (u.s.c in short), respectively lower-semicontinuous (l.s.c. in short), in $E$
if all the components $u_i$, $i=1,\dots, M$, are   u.s.c., respectively l.s.c., in $E$.
We define in the same way bounded uniformly continuous ($BUC$) and Lipschitz
continuous functions $u:E\to \R^M.$
 If $u=(u_1,\dots,u_M)$, $v=(v_1,\dots,v_M)$, are two functions
defined in a set $E$ we write  $u\le v$ in $E$ if $u_i\le v_i$ in $E$ for all $i\in \{1,\dots,M\}$.

We recall the definition of viscosity solution for the  system \eqref{HJS}.
\begin{defn}
\item[i)] An u.s.c. function $u:  \R^N\times(0,T) \to\R^M$
is said a viscosity subsolution of \eqref{HJS} if $u_i(\cdot,0)\leq u_{0,i}$ in
$\R^N$ for all  $i\in \{1,\dots,M\}$ and if
whenever $\phi\in C^1$,
$i\in \{1,\dots,M\}$ and $u_i-\phi$ attains
a local maximum at $(x,t)$ with $t>0$, then
\[\frac{\partial \phi}{\partial t}(x,t)+H_i(x,\frac{x}{\eps},u(x,t),D\phi(x,t)) \le 0.\]
\item[ii)]  A l.s.c. $v:  \R^N\times(0,T) \to\R^M$ is said a viscosity  supersolution of
\eqref{HJS} if $v_i(\cdot,0)\geq u_{0,i}$ in
$\R^N$ for all  $i\in \{1,\dots,M\}$ and if whenever $\phi\in C^1$,
$i\in \{1,\dots,M\}$ and $v_i-\phi$ attains
a local minimum at $(x,t)$ with $t>0$, then
\[\frac{\partial \phi}{\partial t}(x,t)+H_i(x,\frac{x}{\eps},v(x,t),D\phi(x,t)) \ge 0.\]
\item[iii)] A continuous function $u$  is said a viscosity solution of  \eqref{HJS} if it is both
a viscosity sub- and supersolution of \eqref{HJS}.
\end{defn}
\section{The evolutive problem for $\eps>0$}\label{system}
In this section we study the system \eqref{HJS}  for $\eps>0$ fixed.
We first  prove a comparison theorem which applies to prove existence and uniqueness for \eqref{HJS}.
Without loss of generality, we can skip the $y$-dependence in the Hamiltonians
below and we prove a slightly more general result for $H_j=H_j(x,t,u,p)$ which depends also
on $t$ (and is continuous in $\R^N\times [0,T]\times\R^M \times\R^N$).
\begin{prop}\label{PrComp}
Let $u$ be a bounded u.s.c. subsolution and $v$  be a bounded l.s.c. supersolution of
\begin{equation}\label{HJSa}
\left\{
  \begin{array}{ll}
\frac{\partial u_i}{\partial t}+  H_i({x}, t,u, Du_i )=0
   & (x,t)\in  \R^N\times (0,T), \\[5pt]
  u_{i}(x,0)=u_{0,i}(x)&x\in \R^N,\ \ i=1, \cdots M,
  \end{array}
\right.
\end{equation}
where $H_i$ satisfies \eqref{H1}-\eqref{A3} and $u_{0}$ satisfies \eqref{H2}.
Then $u\leq v$ in $\R^N\times [0,T]$
and there exists a unique continuous viscosity solution $u$
of \eqref{HJSa}.
\end{prop}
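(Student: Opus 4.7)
The plan is to prove the comparison principle by adapting the classical doubling of variables technique to systems, using the monotonicity condition \eqref{A3} as the key device for handling the coupling. Existence will then follow from Perron's method, so the bulk of the work is the inequality $u\le v$.

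First I would argue by contradiction, assuming that $\max_{i}\sup_{\R^N\times[0,T]}(u_i-v_i)>0$. To reduce to bounded domains and strict inequalities, I introduce the standard penalizations $-\beta t$ and $-\alpha(\abs{x}^2+\abs{y}^2)$, and double variables in both space and time by considering, for each index $i$,
\begin{equation*}
\Psi_i(x,y,t,s)=u_i(x,t)-v_i(y,s)-\frac{\abs{x-y}^2}{\eps^2}-\frac{(t-s)^2}{\eta^2}-\alpha(\abs{x}^2+\abs{y}^2)-\beta t.
\end{equation*}
Because $u,v$ are bounded and u.s.c./l.s.c., the function $\Psi_i$ attains its supremum at some point $(x_\eps,y_\eps,t_\eps,s_\eps)$. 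Taking the largest maximum over the finitely many indices $i\in\{1,\dots,M\}$, I select an index $j$ realizing
\begin{equation*}
u_j(x_\eps,t_\eps)-v_j(y_\eps,s_\eps)=\max_{k=1,\dots,M}\bigl(u_k(x_\eps,t_\eps)-v_k(y_\eps,s_\eps)\bigr).
\end{equation*}
This is the precise configuration required by the monotonicity assumption \eqref{A3} with $r=u(x_\eps,t_\eps)$ and $s=v(y_\eps,s_\eps)$.

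Next I carry out the usual viscosity-solution calculus: classical estimates give $\abs{x_\eps-y_\eps}^2/\eps^2+(t_\eps-s_\eps)^2/\eta^2\to 0$, $\alpha\abs{x_\eps}^2\to 0$, and, for $\alpha,\beta,\eps,\eta$ sufficiently small, $t_\eps,s_\eps>0$ (using the initial-data inequality for the boundary regime). The coercivity \eqref{H1}ii) keeps the test gradient $p_\eps:=2(x_\eps-y_\eps)/\eps^2+2\alpha x_\eps$ bounded. Writing the sub- and supersolution inequalities for $u_j$ and $v_j$ against the test functions obtained by freezing the other variables, subtracting yields
\begin{equation*}
\beta+\tfrac{2(t_\eps-s_\eps)}{\eta^2}-\tfrac{2(t_\eps-s_\eps)}{\eta^2}\le H_j(y_\eps,s_\eps,v(y_\eps,s_\eps),q_\eps)-H_j(x_\eps,t_\eps,u(x_\eps,t_\eps),p_\eps),
\end{equation*}
with $q_\eps=2(x_\eps-y_\eps)/\eps^2-2\alpha y_\eps$. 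The monotonicity \eqref{A3} applied to the pair $(u(x_\eps,t_\eps),v(y_\eps,s_\eps))$ via the index $j$ allows me to bound the right-hand side by $H_j(y_\eps,s_\eps,u(x_\eps,t_\eps),q_\eps)-H_j(x_\eps,t_\eps,u(x_\eps,t_\eps),p_\eps)$, and then the uniform modulus \eqref{H1}iv) controls this by $\omega((1+\abs{p_\eps})(\abs{x_\eps-y_\eps}+\abs{t_\eps-s_\eps}))+\text{(small)}$, which tends to $0$ as the parameters vanish. This yields $\beta\le 0$, a contradiction.

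The main obstacle is the one the systems structure introduces: the classical one-equation argument needs to be coupled to the choice of $j$ so that the monotonicity condition \eqref{A3} is usable, because $u$ and $v$ are evaluated at different points after doubling. Once that is handled, the only other technical point is the $x$-$t$ uniform continuity in \eqref{H1}iv)--which is written for $(x,y)$ but applies without change to the $H_j(x,t,u,p)$ form obtained after skipping the $y$ dependence--and the reduction of the unbounded domain via $\alpha(\abs{x}^2+\abs{y}^2)$. With $u\le v$ established, existence and uniqueness in $BUC$ follow by Perron's method applied componentwise: one builds global Lipschitz sub- and supersolutions from \eqref{H1} and \eqref{H2} (using that $u_{0,i}$ can be approximated by Lipschitz data and that coercivity produces a priori gradient bounds), and then takes the standard supremum of subsolutions below a supersolution.
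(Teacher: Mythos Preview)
Your approach is essentially the paper's: double variables in space and time, penalize with $\beta(|x|^2+|y|^2)$ and $\eta t$, choose the index $j$ realizing the maximum of $u_j-v_j$ at the optimal point so that \eqref{A3} applies with $r=u(\bar x,\bar t)$, $s=v(\bar y,\bar s)$, subtract the two viscosity inequalities, and finish by Perron's method.

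There is one genuine gap. Coercivity does \emph{not} bound the test gradient $p_\eps$ in a comparison proof for merely bounded sub/super\-solutions. The classical estimates give only $|x_\eps-y_\eps|^2/\eps^2\le C$, hence $|p_\eps|\lesssim 1/\eps$, which blows up as $\eps\to 0$. Coercivity would help only if you already controlled the time-derivative part $2(t_\eps-s_\eps)/\eta^2$ appearing in the subsolution inequality, but the standard estimates bound $(t_\eps-s_\eps)^2/\eta^2$, not $(t_\eps-s_\eps)/\eta^2$. The paper never bounds $\bar p$; instead it exploits the precise form of \eqref{H1}iv): the modulus sees the product $(1+|\bar p|)\,|\bar x-\bar y|$, and
\[
(1+|\bar p|)\,|\bar x-\bar y|\ \le\ |\bar x-\bar y|+\frac{|\bar x-\bar y|^2}{\alpha}\ \longrightarrow\ 0,
\]
which is exactly why assumption \eqref{H1}iv) is stated with the factor $(1+|p|)$. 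Your estimate $\omega\bigl((1+|p_\eps|)(|x_\eps-y_\eps|+|t_\eps-s_\eps|)\bigr)$ also mishandles the $t$-variable: \eqref{H1}iv) says nothing about $t$, and the cross term $|p_\eps|\,|t_\eps-s_\eps|$ need not vanish. The paper treats the $t$-difference separately via \eqref{H1}iii) (uniform continuity on bounded sets), sending the time-doubling parameter to $0$ first while the other parameters are fixed so that $\bar x$, $\bar p$ remain in a compact set depending on those fixed parameters. The order of limits matters and should be made explicit.
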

\begin{proof}
See  the Appendix \ref{Appendix}.
\end{proof}
\begin{prop}\label{regBUC}
Under the assumptions of Proposition \ref{PrComp}, let $u$ be the
unique bounded continuous viscosity solution of \eqref{HJSa}.
Then $u\in BUC(\R^N\times [0,T]).$
\end{prop}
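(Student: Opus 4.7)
The plan is to combine continuous dependence on the initial datum (delivered by Proposition \ref{PrComp}) with an approximation of $u_0$ by Lipschitz functions. For Lipschitz data one produces an explicit modulus of continuity for the solution; uniform convergence then transfers uniform continuity to the general $BUC$ case.

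First I would establish continuous dependence on $u_0$. If $u,\tilde u$ are the unique bounded continuous solutions associated to $BUC$ data $u_0,\tilde u_0$ and $c:=\max_i|u_{0,i}-\tilde u_{0,i}|_\infty$, I claim that $\tilde u + c\mathbf{1}$, where $\mathbf{1}=(1,\dots,1)$, is a supersolution of the system solved by $u$. Indeed, shifting every component by the same nonnegative constant $c$ preserves the ordering appearing in \eqref{A3}, so the monotonicity yields $H_i(x,t,\tilde u + c\mathbf{1},D\tilde u_i)\geq H_i(x,t,\tilde u,D\tilde u_i)$, while $(\tilde u+c\mathbf{1})(x,0)\geq u_0(x)$ by the choice of $c$. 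Proposition \ref{PrComp} and symmetry then give $|u_i-\tilde u_i|_\infty\leq c$ for every $i$. Choosing $u_0^n$ Lipschitz (e.g.\ via sup-inf convolutions of each component) with $\max_i|u_{0,i}^n-u_{0,i}|_\infty\to 0$ and denoting by $u^n$ the associated solutions, it follows that $u^n\to u$ uniformly on $\R^N\times[0,T]$. It therefore suffices to show $u^n\in BUC$ for each $n$, which reduces the proof to the case of a Lipschitz initial datum with some constant $L$.

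For Lipschitz initial datum I would establish a spatial and a temporal modulus separately. Fixing $h\in\R^N$, the function $\tilde u(x,t):=u(x+h,t)$ solves the same system but with Hamiltonian $H_i(x+h,t,\cdot,\cdot)$ and initial datum $u_0(\cdot+h)$. Replaying the doubling-of-variables proof of Proposition \ref{PrComp} for these two slightly different Hamiltonians, the only new term to control is $H_i(x+h,t,r,p)-H_i(x,t,r,p)$, which \eqref{H1}iv bounds by $\omega((1+|p|)|h|)$; coercivity \eqref{H1}ii furnishes the required bound on the gradient entering the penalization, while the Lipschitz bound on $u_0$ controls the initial error by $L|h|$. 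This yields a modulus $m_S$ with $\max_i|u_i(x+h,t)-u_i(x,t)|\leq m_S(|h|)$, uniformly in $(x,t)$. For the temporal modulus, since \eqref{HJSa} is autonomous in $t$, the translated function $u(\cdot,\cdot+s)$ solves the system with initial datum $u(\cdot,s)$, so continuous dependence gives $\max_i|u_i(x,t+s)-u_i(x,t)|\leq\max_i|u_i(\cdot,s)-u_{0,i}|_\infty$. The Lipschitz bound on $u_0$ together with the local boundedness of $H_i$ from \eqref{H1}iii provides a constant $C$ for which $u_0\pm Cs\mathbf{1}$ are viscosity super/subsolutions of \eqref{HJSa}, hence by Proposition \ref{PrComp} $\max_i|u_i(\cdot,s)-u_{0,i}|_\infty\leq Cs$, giving a Lipschitz bound in $t$.

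The main obstacle is the spatial estimate: the comparison proof must be reworked for two systems with \emph{different} Hamiltonians, keeping the monotonicity \eqref{A3} active on the coupling terms while absorbing the $x$-perturbation via \eqref{H1}iv. This is precisely where the coercivity \eqref{H1}ii becomes indispensable, in order to tame the factor $(1+|p|)$ appearing in the modulus $\omega$ when the penalization gradient is sent to infinity.
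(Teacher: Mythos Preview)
There are two genuine gaps. First, Proposition~\ref{regBUC} is stated under the assumptions of Proposition~\ref{PrComp}, where the Hamiltonians are explicitly allowed to depend on $t$ (see the paragraph preceding Proposition~\ref{PrComp}). Your temporal modulus rests on ``since \eqref{HJSa} is autonomous in $t$'', which is false in this generality: for $t$-dependent $H_j$ the shifted function $u(\cdot,\cdot+s)$ solves a system with Hamiltonians $H_j(\cdot,\cdot+s,\cdot,\cdot)$, not the original one, and your continuous-dependence estimate does not apply. Second, your spatial argument is incomplete as written. Comparing $u$ with $u(\cdot+h,\cdot)$ by doubling of variables produces the extra term $\omega((1+|\bar p|)|h|)$ with $|\bar p|=|\bar x-\bar y|/\a$, which is only $O(\a^{-1/2})$ a priori. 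Coercivity does not bound $|\bar p|$ directly here: the subsolution inequality reads $H_{\bar j}(\ldots,\bar p+2\b\bar x)\leq -(\bar t-\bar s)/\mu-\eta$, and $(\bar t-\bar s)/\mu$ is uncontrolled. The way coercivity normally enters is to \emph{first} get Lipschitz continuity in time (barriers plus autonomy), then read $|H_j|\leq C$ off the equation and conclude $|Du_j|\leq L$---but that again needs autonomy and reverses the order in which you presented the steps. Your scheme is thus essentially the one the paper uses later in Proposition~\ref{PrUC} for the autonomous system \eqref{HJS}, not a proof of the more general Proposition~\ref{regBUC}.

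The paper avoids both issues. For the spatial modulus it does not translate at all: taking $v=u$ in the comparison proof, inequality~\eqref{buc23} already gives $u_j(x,t)-u_j(y,t)\leq 2\rho+\sqrt{2}\,C_\rho|x-y|$ for any $BUC$ datum, with no Lipschitz reduction and no second Hamiltonian to absorb. The temporal modulus is then obtained from a local parabolic barrier $\chi_j(y,t)=u_j(x_0,t_0)+\rho+C_\rho|y-x_0|^2+K_\rho(t-t_0)$ (an adaptation of \cite[Lemma~9.1]{bbl02}); this uses only the spatial modulus just established and the monotonicity~\eqref{A3}, and works for $t$-dependent Hamiltonians.
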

\begin{proof}
See  the Appendix \ref{Appendix}.
\end{proof}
In the following proposition, we prove
some a-priori bounds, independent of $\eps$, which are used in the  homogenization theorem.
\begin{prop}\label{PrUC}
Assume \eqref{H1} and \eqref{A3} and \eqref{H2}.
For any $\eps>0$ there exists a unique solution $u^\eps\in BUC(\R^N\times
[0,T])$ of \eqref{HJS}. Moreover
\begin{itemize}
\item[i)] If $u_0$ is bounded Lipschitz continuous, then
$u^\eps\in W^{1,\infty}(\R^N\times [0,T])$
and $|Du^\eps|_\infty$ can be bounded  independently of $\eps.$
\item[ii)] If $u_0$ is $BUC,$ then
\begin{eqnarray}
&& |u^\eps|_\infty \leq L(H_i, |u_0|_\infty, T)\label{PrUC1},\\
&&  |u^\eps(x,t)-u^\eps(y,s)|\leq \omega_{\rm sp}(|x-y|)+\omega_{\rm tm}(|t-s|)
\quad x,y\in\R^N,\,  t,s\in [0,T]\label{PrUC2}
\end{eqnarray}
and $ L(H_i, |u_0|_\infty, T), \omega_{\rm sp}, \omega_{\rm tm}$ are
independent of $\eps.$
\end{itemize}
\end{prop}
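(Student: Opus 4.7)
Existence and uniqueness of $u^\eps\in BUC(\R^N\times[0,T])$ follow directly from Propositions \ref{PrComp} and \ref{regBUC} applied with the time-independent Hamiltonian $(x,t,u,p)\mapsto H_i(x,x/\eps,u,p)$; the remaining task is the uniform-in-$\eps$ quantitative bounds, which I would obtain by combining carefully chosen sub/super-solution barriers with the comparison principle of Proposition \ref{PrComp}. Uniformity in $\eps$ is automatic once the barriers are built: assumptions \eqref{H1} and \eqref{A3} together with the $\Z^N$-periodicity of $H_i$ in $y$ do not depend on $\eps$, so the barriers and the constants produced by comparison do not either.

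For part i), set $L=|Du_0|_\infty$ and $M=|u_0|_\infty$. Assumption \eqref{H1}iii) provides a constant $K$ such that $|H_i(x,y,r,p)|\le K$ for all $(x,y)$, all $|r|$ below an a priori bound, and $|p|\le L$; this $K$ is $\eps$-independent. Choosing $C\ge K$, the functions $u_0\pm Ct$ are super- and sub-solutions of \eqref{HJS} (on short time intervals, with iteration in case $H_i(x,y,r,0)$ grows in $r$), and Proposition \ref{PrComp} yields $|u^\eps(x,t)-u_0(x)|\le Ct$. The $t$-translation invariance of \eqref{HJS}, together with uniqueness, then upgrades this to $|u^\eps(x,t)-u^\eps(x,s)|\le C|t-s|$. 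Feeding this Lipschitz-in-$t$ bound back into the equation gives $|H_i(x,x/\eps,u^\eps,Du^\eps_i)|\le C$ in the viscosity sense, and the coercivity \eqref{H1}ii) then forces the uniform gradient bound $|Du^\eps|_\infty\le R_0$.

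For part ii), approximate $u_0$ by a standard mollification $u_0^\delta$, which is Lipschitz with constant $L(\delta)$, satisfies $|u_0^\delta|_\infty\le M$, and $|u_0^\delta-u_0|_\infty\le\eta(\delta)\to 0$. Let $u^{\eps,\delta}$ be the associated solution. The decisive estimate is the $L^\infty$ contraction
\[
|u^\eps-u^{\eps,\delta}|_\infty\le|u_0-u_0^\delta|_\infty,
\]
which follows from Proposition \ref{PrComp} once one notes that, by \eqref{A3} applied along the direction $(1,\dots,1)\in\R^M$, adding a common positive constant to all components of a sub- (resp.\ super-) solution preserves it. Combining this contraction with part i) applied to $u^{\eps,\delta}$ yields \eqref{PrUC1} on letting $\delta\to 0$, and \eqref{PrUC2} by optimizing in $\delta$ the triangle-inequality estimate $|u^\eps(x,t)-u^\eps(y,s)|\le 2\eta(\delta)+C_\delta(|x-y|+|t-s|)$; the resulting moduli $\omega_{\rm sp}, \omega_{\rm tm}$ depend only on \eqref{H1}, on $M$, and on the modulus of continuity of $u_0$.

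The main technical obstacle I anticipate is making the coercivity step of part i) fully rigorous in the viscosity framework: one must transfer the Lipschitz-in-$t$ bound on $u^\eps$ into a uniform control of the Hamiltonian at all $C^1$ touching functions, which typically requires either a sup/inf-convolution regularization of $u^\eps$, or a pointwise argument at differentiability points combined with a density argument. Apart from this, the monotonicity \eqref{A3} is the crucial structural input throughout — without the directional monotonicity along $(1,\dots,1)$ the $L^\infty$ contraction in part ii) would fail and the whole approximation scheme would collapse.
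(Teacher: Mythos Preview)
Your approach is essentially the same as the paper's: barriers $u_0\pm Ct$ combined with time-translation invariance and the $L^\infty$ contraction give the Lipschitz-in-$t$ bound, coercivity \eqref{H1}ii) upgrades this to a uniform spatial gradient bound, and the $BUC$ case follows by Lipschitz approximation of $u_0$ together with the contraction estimate, exactly as you outline. One small slip: under \eqref{A3}, adding a common positive constant to all components preserves \emph{super}solutions and subtracting preserves \emph{sub}solutions, not the other way around as you wrote---but this is precisely the direction needed to deduce the $L^\infty$ contraction $|u^\eps-u^{\eps,\delta}|_\infty\le|u_0-u_0^\delta|_\infty$, so your conclusion is correct.
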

\begin{proof}
For fixed $\eps >0,$
the existence and uniqueness of the solution $u^\eps\in BUC(\R^N\times [0,T])$
to \eqref{HJS} follows immediately by Propositions \ref{PrComp} and
\ref{regBUC}. Note that the $L^\infty$ bound for $u^\eps$ does not depend on
$\eps.$ Indeed, replacing $C$ in \eqref{def1C} by
\begin{eqnarray*}
C:=\sup\left\{ \left|H_j\left(x,\frac{y}{\eps},r,0\right)\right|: \, x\in\R^N,y\in\R^N,
|r|\le | u_0|_\infty,  \,1\leq j\leq M \right\}
\end{eqnarray*}
which is finite and independent of $\eps$ by periodicity of $H_j$ in $y,$
we obtain \eqref{linfini}  and therefore \eqref{PrUC1}.\par
We now prove \eqref{PrUC2}.
Let $u^\eps$ be a subsolution and $v^\eps$ be a supersolution of \eqref{HJS}
which are $BUC$ (the modulus of continuity of the
solution may a priori depend of $\eps$). Arguing as in Prop. \ref{PrComp}, from \eqref{blabla12}
we have, for all $j$ and $(x,t)\in\R^N\times [0,T],$
\begin{eqnarray*}
u_j^\eps(x,t)-v_j^\eps(x,t)
&\leq&
\eta t
+ (u_{\ov j}^\eps(\ov x,0)-v_{\ov j}^\eps (\ov y,0))^+ - \frac{|\ov x-\ov y|^2}{2\a}\\
&\leq&
\eta T +\mathop{\rm max}_{1\leq j\leq M}\mathop{\rm sup}_{\R^N}
(u_{j}^\eps(\cdot,0)-v_{j}^\eps(\cdot ,0))^+
+ (v_{\ov j}^\eps (\ov x,0)- v_{\ov j}^\eps (\ov y,0))^+ - \frac{|\ov x-\ov y|^2}{2\a}
\end{eqnarray*}
and
\begin{eqnarray*}
\mathop{\rm lim\,sup}_{\alpha\to 0} (v_{\ov j}^\eps (\ov x,0)- v_{\ov j}^\eps (\ov y,0))^+
- \frac{|\ov x-\ov y|^2}{2\a}
\leq 0
\end{eqnarray*}
since $v$ is uniformly continuous (see \eqref{unif-cont11} and \eqref{unif-cont22}).
Letting $\alpha\to 0$ and then $\eta \to 0,$ we obtain
\begin{eqnarray}\label{comp-max}
\mathop{\rm max}_{1\leq j\leq M}\mathop{\rm sup}_{\R^N\times [0,T]} u_j^\eps-v_j^\eps
\leq \mathop{\rm max}_{1\leq j\leq M}\mathop{\rm sup}_{\R^N}
\left(u_{j}^\eps(\cdot,0)-v_{j}^\eps(\cdot ,0)\right)^+.
\end{eqnarray}

We have to prove that the modulus of continuity of $u^\eps$ do not
depend on $\eps.$ We proceed by approximation showing first the
result for $u_0$ Lipschitz continuous. Replacing $C$ in \eqref{def1C} by
\begin{eqnarray*}
C:=\sup\left\{ \left|H_j\left(x,\frac{y}{\eps},r,p\right)\right|: \, x\in\R^N,\,y\in\R^N,\,
|r|\le | u_0|_\infty,\,  |p|\leq |Du_0|_\infty, \,1\leq j\leq M \right\},
\end{eqnarray*}
we prove as at the beginning of the proof of Proposition \ref{regBUC}
that, if $v^\pm (x,t)=( u_{0,1}(x) \pm Ct,\cdots , u_{0,M}(x) \pm Ct),$
then $v^+$ is a supersolution and $v^-$ is a subsolution of \eqref{HJS}.
By Proposition \ref{PrComp}, it follows
\begin{eqnarray}\label{comp999}
v^-\leq u \leq v^+ \quad {\rm in} \ \R^N\times [0,T].
\end{eqnarray}
Let $0\leq h\leq T$ and
note that, since the $H_i$'s are independent of $t,$
 $u^\eps (\cdot ,\cdot +h)$ is still a solution of \eqref{HJS}
with initial data $u^\eps (\cdot ,h).$
By \eqref{comp-max} and \eqref{comp999}, we get that, for all $j,$ $(x,t)\in \R^N\times [0,T],$
\begin{eqnarray*}
 u_j^\eps(x,t+h)-u_j^\eps(x,t)
\leq \mathop{\rm max}_{1\leq j\leq M}\mathop{\rm sup}_{\R^N}
\left(u_{j}^\eps(\cdot,h)-u_{0,j}\right)^+
\leq Ch
\end{eqnarray*}
and therefore $u_j^\eps$ is Lipschitz with respect to $t$ for every $x$ with
\begin{eqnarray*}
\left|\frac{\partial u_j^\eps}{\partial t}\right|_\infty \leq C.
\end{eqnarray*}
From   \eqref{HJS}, we obtain, in the viscosity sense
\begin{eqnarray*}
-C\leq H_j(x,\frac{x}{\eps},u^\eps, Du_j^\eps ) \leq C \qquad(x,t)\in\R^N\times [0,T].
\end{eqnarray*}
By the coercivity of $H_i$ (uniformly with respect to the other variables, see
\eqref{H1}ii)),
there exists $L_j>0$ such that, for all $p\in\R^N,$
\begin{eqnarray*}
|p|\geq L_j \ \ \Longrightarrow \ \ {\rm for \ all} \ \eps >0, x\in\R^N,  r\in\R^M, \
 H_j(x,\frac{x}{\eps},r, p)>C.
\end{eqnarray*}
It follows that $u_j^\eps$ is Lipschitz continuous in $x$ for every $t$ with
$|Du_j^\eps|_\infty \leq L_j$ (with $L_j$ independent of $\eps$).

Now if $u_0\in BUC(\R^N),$ then it is possible
to approach it by Lipschitz continuous functions: for all $\gamma>0,$ there
exists $u_0^\gamma$ such that $|u_0-u_0^\gamma|_\infty \leq \gamma.$
Let $u^\eps$ (respectively $u^{\eps,\gamma}$) be the unique $BUC$
(respectively Lipschitz continuous with constant $C_\gamma$) solution of \eqref{HJS} with initial data
$u_0$ (respectively $u_0^\gamma$). Note that $C_\gamma$ is independent of
$\eps.$ By \eqref{comp-max}, we obtain
\begin{eqnarray*}
|u^\eps - u^{\eps,\gamma}|_\infty \leq |u_0-u_0^\gamma|_\infty \leq \gamma.
\end{eqnarray*}
It follows that, for all $1\leq j\leq M,$ $x,y\in \R^N,$ $t,s\in [0,T],$
\begin{eqnarray}\label{est634}\hspace*{1cm}
|u_j^\eps (x,t) - u_j^{\eps}(y,s)| \leq
|u_j^{\eps,\gamma} (x,t) - u_j^{\eps,\gamma}(y,s)|+2\gamma
\leq C_\gamma (|x-y|+|t-s|)+2\gamma.
\end{eqnarray}
Since \eqref{est634} holds for all $\gamma >0$ and $C_\gamma$ is independent
of $\eps,$ we conclude that $u^\eps$ is $BUC$ with
a modulus independent of $\eps.$
\end{proof}.

\section{The cell problem}\label{EH}
In this section we prove the existence of the effective Hamiltonians, the
Hamiltonians for the limit system \eqref{sys-homog}.
Since at this level we work for a fixed index $i$, i.e.   there is no coupling,
we can follow the classical argument based on the ergodic approximation of the cell problem.
The only point is to prove that effective Hamiltonians we are going to define still verify some regularity and monotonicity properties
 so that the homogenized problem verifies a comparison principle.\\

\noindent\textbf{The Cell problem.}
For any $i=1,\dots,M$, given $(x,r,p)\in\R^N\times\R^M\times\R^N$, find $\l_i=\l_i(x,r,p)$ such that
the equation
\begin{equation}\label{CP}
    H_i(x,y,r,p+Dv(y))=\l_i\qquad y\in\T^N
\end{equation}
admits a   viscosity solution $v_i= v_{i,x,r,p}$.
\begin{prop}\label{PropCP}
Assume \eqref{H1}. For any $i=1,\dots,M$, there exists a unique $\l_i=\l_i(x,r,p)\in\R$
such that  the cell problem \eqref{CP} admits
a  periodic solution $v_i(y)=v_i(y;x,r,p)$ which is Lipschitz continuous.
More precisely, for all $R>0,$ there exists $L_R>0$ such that
 \begin{eqnarray*}
{\rm sup} \{ |D_y v_{i}(y;x,r,p)|_\infty \, : \, x\in\R^N, |r|+|p|\leq R \}\leq L_R.
\end{eqnarray*}
\end{prop}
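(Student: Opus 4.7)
The plan is to apply the classical ergodic approximation of Lions, Papanicolau and Varadhan. Fix $i$ and $(x,r,p)$ and, for $\d >0$, consider the approximate cell problem
\[\d w_\d(y) + H_i(x,y,r,p+Dw_\d(y)) = 0, \quad y\in\T^N.\]
By coercivity and continuity of $H_i$, this scalar coercive stationary equation on the torus admits a unique continuous periodic viscosity solution $w_\d$. Setting $C_0 := \sup\{|H_i(x,y,r,p)| : y\in\T^N\}$, which is finite by \eqref{H1}iii), the constants $\pm C_0/\d$ are super- and subsolutions, so by comparison $|\d w_\d|_\infty \le C_0$, independently of $\d$.

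The equation then yields $|H_i(x,y,r,p+Dw_\d)|\le C_0$ in the viscosity sense, and the uniform coercivity \eqref{H1}ii) provides a constant $L = L(C_0,|p|)$ such that $H_i(x,y,r,q) > C_0$ whenever $|q|\ge L$. This gives the Lipschitz bound $|Dw_\d|_\infty \le L + |p|$ uniform in $\d$ (rigorously obtained via a doubling-of-variables argument using the modulus of continuity \eqref{H1}iv) to handle the $y$-dependence). Set $v_\d := w_\d - w_\d(0)$; then $\{v_\d\}$ is uniformly bounded and equi-Lipschitz on the compact torus $\T^N$. Since $\mathop{\rm osc}_{\T^N}(\d w_\d) = \d\,\mathop{\rm osc}_{\T^N}(w_\d)\to 0$ as $\d\to 0$, along any subsequence $\d_n\to 0$ with $\d_n w_{\d_n}(0)\to -\l_i$ the full function $\d_n w_{\d_n}$ converges uniformly on $\T^N$ to the constant $-\l_i$. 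Arzel\`a-Ascoli provides a further subsequence with $v_{\d_n}\to v_i$ uniformly, and stability of viscosity solutions yields
\[H_i(x,y,r,p+Dv_i(y)) = \l_i, \quad y\in\T^N,\]
with $v_i$ periodic and $(L+|p|)$-Lipschitz continuous.

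The main obstacle is the uniqueness of $\l_i$, which is needed to identify the full limit and to make $\l_i$ well-defined as a function of $(x,r,p)$. Suppose $(\l_1,v_1)$ and $(\l_2,v_2)$ are two such pairs with $\l_1<\l_2$. At a maximum point of $v_1-v_2$ on $\T^N$, located via a further doubling of variables with the resulting error terms absorbed thanks to \eqref{H1}iv), one obtains a common test gradient $q$ such that simultaneously $H_i(x,y_0,r,p+q)\le\l_1$ and $H_i(x,y_0,r,p+q)\ge\l_2$, a contradiction. Finally, the uniform Lipschitz estimate on $\{|r|+|p|\le R\}$ follows at no extra cost: the constants $C_0$ and $L$ used above depend on $(x,r,p)$ only through $R$, thanks to the uniform coercivity and local boundedness provided by \eqref{H1}ii)--iii).
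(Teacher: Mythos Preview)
Your proof is correct and follows essentially the same vanishing-discount (ergodic approximation) argument as the paper: solve $\d w_\d + H_i(x,y,r,p+Dw_\d)=0$ on $\T^N$, obtain uniform $L^\infty$ and Lipschitz bounds from comparison and coercivity, extract a convergent subsequence by Arzel\`a--Ascoli, and pass to the limit by stability. You actually give more detail than the paper on two points---the oscillation argument for the convergence of $\d w_\d$ to a constant, and the uniqueness of $\l_i$ via a max-of-$v_1-v_2$ contradiction---which the paper merely asserts without proof.
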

\begin{proof}
We only give a sketch of the proof, see for instance \cite{concordel96} for details.
Fix $i\in\{1,\dots,M\},$ $R>0$ and $(x,r,p)\in \R^N\times\R^M\times\R^N$
such that $|r|+|p|\leq R.$
Consider the ergodic approximation of the cell problem
\begin{equation}\label{EA}
   \a w^\a_i(y)+H_i(x,y,r, p+Dw^\a_i(y))=0\qquad y\in \T^N.
\end{equation}
By \eqref{H1}, \eqref{EA} satisfies a comparison principle for any $\a>0$ and therefore it admits a unique
continuous viscosity solution $w^\a_i$ which is periodic. By \eqref{H1}iii),
 \begin{eqnarray*}
C_R:= {\rm sup} \{ |H_j(x,y,r,p)|\, :\,  x,y\in\R^N, |r|+|p|\leq R,
j=1,\cdots, M\}< +\infty
\end{eqnarray*}
and
$-C_R/\a$ is a subsolution  and  $C_R/\a$ is a supersolution of  \eqref{EA}.
It follows
 \begin{eqnarray*}
-C_R\leq \a w^\a_i\leq C_R.
\end{eqnarray*}
By the coercitivity of the Hamiltonian $H_i,$ there exists $L_R=L(R,H_i)$ such that
\begin{eqnarray*}
|p+Dw^\a_i|> L_R \ \ \Longrightarrow \ \ H_i(x,y,r,p+Dw^\a_i)>C_R.
\end{eqnarray*}
We then get the global gradient bounds for $w^\a_i$ independent of $\alpha:$
\begin{eqnarray*}
|Dw^\a_i|_\infty \leq L_R.
\end{eqnarray*}
It follows that, for a fixed $y_0\in\T^N$, there exists a sequence  $\a_n\to 0$ such that
\begin{align*}
&\lim_{n\to\infty} {\a_n} w^{\a_n}_i(y)=\l_i\qquad &\text{for any $y\in\T^N,$}\\
&\lim_{n\to\infty} w^{\a_n}_i(y)-w^{\a_n}_i(y_0)=v_i(y) &\text{uniformly in $\T^N$.}
\end{align*}
Moreover $v_i$ is Lipschitz continuous with constant $L_R$
and by standard stability result in viscosity solution theory $(\l_i, v_i)$ is a solution to \eqref{CP}.
Finally it is possible to prove that the number  $\l_i$ for which
\eqref{CP} admits a solution is univocally defined, while it is well known that
in general the viscosity solution of \eqref{CP} is not unique.
\end{proof}

\begin{defn}
For any $i=1,\dots,M$, the effective Hamiltonian $\ov H_i(x,r,p)$ associated to the Hamiltonian $H_i$ is defined
by setting
\[\ov H_i(x,r,p)=\l_i\]
where $\l_i$ is given by Proposition \ref{PropCP}.
\end{defn}
We now deduce some  properties of the Effective Hamiltonians
\begin{prop} \label{prop-hbarre}
Assume \eqref{H1}.
For any $i=1,\dots,M$, the effective Hamiltonian $\ov H_i$ satisfies
\begin{itemize}
\item[i)] $\ov H_i$ is  continuous  in $(x,r,p)$ and, for all $R>0,$
there exists a modulus of continuity $\omega_R$ such that,
for all $x,x'\in \R^N,$ $r, r'\in \R^M,$ $p,p'\in \R^N$ with $|r|+|r'|+|p|+|p'|\leq R,$ we have
\begin{eqnarray}\label{mod-hbarre}
|\ov H_i (x,r,p)-\ov H_i (x',r',p')|\leq \omega_R(|x-x'|) + \omega_R(|r-r'|+|p-p'|).
\end{eqnarray}
\item[ii)] $\ov H_i$ is coercive in $p$ (uniformly with respect to $(x,r)$).
\item[iii)] If $H_i$ is convex in $p$, then $\ov H_i$ is convex in $p$.
\item[iv)]  If $H_i$ satisfies \eqref{A3}, then
 $\ov H_i$ satisfies \eqref{A3}.
\end{itemize}
\end{prop}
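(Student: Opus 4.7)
My plan is to prove all four assertions by working with the ergodic approximation $w^\alpha=w^\alpha_i(\cdot;x,r,p)$ introduced in the proof of Proposition \ref{PropCP}: the unique periodic Lipschitz viscosity solution of $\alpha w+H_i(x,y,r,p+Dw)=0$ on $\T^N,$ together with the a priori bound $|Dw^\alpha|_\infty\leq L_R$ valid uniformly in $\alpha$ for $|r|+|p|\leq R,$ and with $\alpha w^\alpha(\cdot)$ converging to $\ov H_i(x,r,p)$ as $\alpha\to 0.$ Because of the strictly monotone $\alpha w$-term, the discounted equation enjoys a robust comparison principle, and each of (i)--(iv) is obtained by constructing a natural sub- or supersolution of one discounted problem from a solution of another, comparing, and letting $\alpha\to 0.$

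For (i), I would fix $R>0$ and two parameter triples $(x,r,p),(x',r',p')$ with $|r|+|r'|+|p|+|p'|\leq R,$ and compare the corresponding discounted solutions $w^\alpha$ and $\tilde w^\alpha.$ Substituting $(x',r',p')$ for $(x,r,p)$ in the Hamiltonian argument of the equation satisfied by $w^\alpha$ produces an error I would estimate in two pieces: the change in $(r,p)$ is controlled by \eqref{H1}iii) applied on the compact set $\{|r|\leq R,\;|q|\leq R+L_R\}$ and contributes a modulus $\omega'_R(|r-r'|+|p-p'|);$ the change in $x$ is controlled by \eqref{H1}iv), where the factor $(1+|p|)$ is absorbed thanks to $|p+Dw^\alpha|\leq R+L_R,$ contributing $\omega((1+R+L_R)|x-x'|).$ Calling $\delta$ their sum, $w^\alpha\pm\delta/\alpha$ are sub/supersolutions of the equation for $\tilde w^\alpha,$ comparison yields $|\alpha w^\alpha-\alpha\tilde w^\alpha|\leq\delta,$ and $\alpha\to 0$ delivers \eqref{mod-hbarre} with $\omega_R$ the maximum of the two moduli.

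For (ii), a maximum point $y_0\in\T^N$ of the corrector $v_i$ from Proposition \ref{PropCP} combined with a constant test function gives $H_i(x,y_0,r,p)\leq \ov H_i(x,r,p),$ so the uniform coercivity \eqref{H1}ii) forces $\ov H_i(x,r,p)\to+\infty$ as $|p|\to\infty$ uniformly in $(x,r).$ For (iii), if $H_i$ is convex in $p,$ then $\theta w^\alpha_{p_0}+(1-\theta)w^\alpha_{p_1}$ is by convexity a subsolution of the discounted equation at $p=\theta p_0+(1-\theta)p_1,$ so comparison and $\alpha\to 0$ yield convexity of $\ov H_i(x,r,\cdot).$ For (iv), if $r_j-s_j=\max_k(r_k-s_k)\geq 0,$ then by \eqref{A3} we have $H_j(x,y,r,\cdot)\geq H_j(x,y,s,\cdot)$ pointwise, hence the discounted solution $w^\alpha_r$ at $(x,r,p)$ is a subsolution of the discounted equation at $(x,s,p),$ and comparison transfers the monotonicity to $\ov H_j$ in the limit.

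The main obstacle is (i): ensuring that the $x$-modulus of $\ov H_i$ no longer carries the $(1+|p|)$ factor present in \eqref{H1}iv). The key ingredient is precisely the uniform Lipschitz bound $L_R$ on the correctors, and the residual $R$-dependence of $\omega_R$ is exactly what accounts for the weaker regularity \eqref{mod-hbarre} relative to \eqref{H1}iv). This degradation of regularity is, as the authors stress in the introduction, the reason why the comparison principle for the homogenized system \eqref{sys-homog} cannot be directly inferred from Proposition \ref{PrComp} and must be established separately.
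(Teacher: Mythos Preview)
Your proof is correct, but it follows a different route from the paper's, and the comparison is instructive.

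The paper works directly with the correctors $v_i$ of the cell problem~\eqref{CP} rather than with the discounted approximations $w^\alpha_i$. For (i), instead of building sub/supersolutions of the discounted equation and invoking comparison, the paper takes two correctors $v,w$ at parameters $(x,r,p)$ and $(x+\a,r+s,p+q)$ and runs a doubling-of-variables argument: the supremum of $v(z)-w(y)-|z-y|^2/\varepsilon^2$ is attained at some $(\bar z,\bar y)$ with $|\bar z-\bar y|/\varepsilon^2\leq L_R$ thanks to the Lipschitz bound of Proposition~\ref{PropCP}, and the two viscosity inequalities at $(\bar z,\bar y)$ combine to give \eqref{mod-hbarre} after sending $\varepsilon\to 0$. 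For (iv), the paper argues by contradiction: assuming $\ov H_j(x,r,p)<\ov H_j(x,s,p)$, it shifts the two correctors so that $v>w$, uses \eqref{A3} to turn the strict inequality between the $\ov H_j$'s into a strict ordering of the operators applied to $v$ and $w$, and then derives a contradiction with the comparison principle for \eqref{EA} after adding a small $\alpha$-term. Parts (ii) and (iii) are simply declared standard with a reference.

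Your approach is more uniform: every step is reduced to the comparison principle for the discounted equation \eqref{EA}, and the limit $\alpha\to 0$ is taken only at the end. This has the advantage of avoiding the doubling argument and the contradiction in (iv); conversely, the paper's approach has the (minor) advantage of working directly at the level of the cell problem, which makes the estimate \eqref{sup-lip} available for later reuse in the proofs of Proposition~\ref{comp-barre} and Theorem~\ref{HT}. Both arguments rest on the same essential point you correctly identify: the uniform gradient bound $L_R$ absorbs the factor $(1+|p|)$ from \eqref{H1}iv), at the price of making the modulus $\omega_R$ depend on $R$.
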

\begin{proof}
We first prove i). Let $R>0$ and $(x,r,p), (\a,s,q)\in\R^N\times\R^M\times \R^N$
such that $|r|+|s|+|p|+|q|\leq R.$
Let $v$, $w$ two periodic functions such that
\begin{eqnarray}
    &H_i(x,y,r,p+Dv(y))=\ov H_i(x,r,p)\qquad y\in \T^N, \label{eq111}\\
     &H_i(x+\a,y,r+s,p+q+Dw(y))=\ov H_i(x+\a,r+s,p+q)\qquad y\in \T^N. \label{eq222}
\end{eqnarray}
By periodicity of $v$ and $w,$ for any $\varepsilon >0,$ the supremum
\begin{eqnarray*}
\mathop{\rm sup}_{z,y\in\R^N} \{v(z)-w(y)-\frac{|z-y|^2}{\varepsilon^2}\}
\end{eqnarray*}
is achieved at some point $(\bar{z},\bar{y})$
(which depends on $x,\a,p,q,r,s,\varepsilon$).
Moreover it is easy to see that, since $v,w$ are bounded Lipschitz continuous,
we have
\begin{eqnarray}\label{sup-lip}
\frac{|\bar{z}-\bar{y}|}{\varepsilon^2}\leq |Dv|_\infty, |Dw|_\infty\leq L_R
 \end{eqnarray}
where $L_R$ is given by Proposition \ref{PropCP}.
Since  $v$ is a viscosity subsolution of \eqref{eq111} and $w$ is a supersolution
of \eqref{eq222}, we obtain
\begin{eqnarray*}
&H_i(x,\bar{z},r,p+\bar{p})\leq \ov H_i(x,r,p), \\
&H_i(x+\a,\bar{y},r+s,p+q+\bar{p})\geq \ov H_i(x+\a,r+s,p+q).
\end{eqnarray*}
It follows, using \eqref{H1}, that
\begin{eqnarray*}
&& \ov H_i(x+\a,r+s,p+q) - \ov H_i(x,r,p)\\
&\leq& H_i(x+\a,\bar{y},r+s,p+q+\bar{p})-H_i(x,\bar{z},r,p+\bar{p}) \\
&=& H_i(x+\a,\bar{y},r+s,p+q+\bar{p})-  H_i(x,\bar{z},r+s,p+q+\bar{p})\\
&& + H_i(x,\bar{z},r+s,p+q+\bar{p}) - H_i(x,\bar{z},r,p+\bar{p})\\
&\leq & \omega \big((1+R+L_R) (|\a|+|\bar{z}-\bar{y}|)\big)
+ \tilde \omega_{R} (|s|+|q|),
\end{eqnarray*}
where $\omega$ is given by \eqref{H1}iv) and $\tilde \omega_{R}$ is a modulus of
continuity of the continuous function $H_i$
on the subset $\R^N\times \T^N\times [-R,R]^M\times B_N(0,R+L_R)$ given by \eqref{H1}iii).
Sending $\varepsilon$ to 0 and setting $\omega_R(l)={\rm max}\{ \omega
((1+R+L_R)l), \tilde \omega_R (l)\},$
we get
\begin{eqnarray*}
 \ov H_i(x+\a,r+s,p+q) - \ov H_i(x,r,p)\leq \omega_R(|\a|)+ \omega_{R} (|s|+|q|),
\end{eqnarray*}
 which ends the proof of i).
The proof of \textit{ii)} and \textit{iii)} are standard, see \cite{concordel96}.

We now prove that
$\ov H_i$, $i=1,\dots, M$, satisfies the monotonicity condition \eqref{A3}. We assume by contradiction that
there exist  $r$, $s\in\R^M$ such that
$\displaystyle r_j-s_j=\mathop{\max}_{k=1,\dots,M}\{r_k-s_k\}\ge 0$ and
\begin{eqnarray*}
\ov H_j(x, r,p)<\ov H_j(x ,s,p)
\end{eqnarray*}
for some $x$, $p\in\R^N$. Let $v$, $w$  be two periodic  functions such that
\begin{align*}
    &H_j (x,y,r ,p+Dv)=\ov H_j(x,r,p)\qquad y\in \T^N,\\
     &H_j(x,y,s,p+Dw)=\ov H_j(x,s,p)\qquad y\in \T^N.
\end{align*}
Since $v$, $w$ are bounded, by adding a constant we can assume  w.l.o.g. that $v>w$ in $\T^N$.
By \eqref{A3}
\begin{eqnarray*}
H_j(x,y,r,p+Dv)=\ov H_j(x,r ,p)<\ov H_j(x,s,p)
\le  H_j(x,y,s,p+Dw)\le H_j(x,y,r,p+Dw)
\end{eqnarray*}
and   for $\a$ sufficiently small
\[
\a v+H_j(x,y,r,p+Dv)>\a w+ H_j(x,y,r,p+Dw)\qquad y\in\T^N.
\]
This last inequality gives a  contradiction by the comparison principle for
\eqref{EA}.
\end{proof}

\section{The Homogenization theorem}\label{Hom}
In this section we prove the Homogenization theorem for the problem \eqref{HJi}.

\begin{prop} \label{comp-barre}
Assume \eqref{H1}, \eqref{A3} and \eqref{H2}.
Then there exists a unique solution $u\in BUC(\R^N\times[0,T])$
of
\begin{equation}\label{HJ0}
\left\{
  \begin{array}{ll}
  \displaystyle{\frac{\partial u_i }{\partial t}}+  \ov H_i(x,u , Du_i  )=0& (x,t)\in \R^N\times (0,\infty),
  \\[10pt]
  u_i (x,0)=u_{0,i} (x)&x\in \R^N, \quad i=1,\dots,M.
  \end{array}
\right.
\end{equation}
\end{prop}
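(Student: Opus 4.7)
The heart of the proposition is a comparison principle for \eqref{HJ0}; once it is available, existence in $BUC$ follows from Perron's method together with the approximation-by-Lipschitz-data argument already used in Proposition~\ref{PrUC}. The genuine obstacle is that the effective Hamiltonians satisfy only the weaker modulus \eqref{mod-hbarre}, in which the $(1+|p|)$ growth of \eqref{H1}iv) is absent. A direct copy of the proof of Proposition~\ref{PrComp} fails because at a maximizer of the doubled function $\Phi(x,y,t)=u_i(x,t)-v_i(y,t)-|x-y|^2/(2\alpha)-\eta t$ the associated test-gradient $\bar p=(\bar x-\bar y)/\alpha$ need not stay bounded as $\alpha\to 0$, whereas the modulus $\omega_R$ only controls $\ov H_i$ on compact sets of $p$. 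The plan is therefore to prove comparison first in the \emph{Lipschitz} class, then build Lipschitz solutions for Lipschitz initial data, and finally recover the $BUC$ case by approximation.

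\emph{Step 1 (comparison for Lipschitz sub- and supersolutions).} Let $u,v$ be bounded sub- and supersolutions that are Lipschitz in $x$ with constant $L$. At a maximiser of $\Phi$ (after a mild localisation $-\beta|x|^2$ if needed to ensure attainment),
\[
\frac{|\bar x-\bar y|^2}{2\alpha}\leq u_i(\bar x,\bar t)-u_i(\bar y,\bar t)\leq L|\bar x-\bar y|,
\]
so $|\bar p|\leq 2L$ and $|\bar x-\bar y|\leq 2L\alpha$. Selecting the index $\bar\imath$ where $\max_k(u_k-v_k)$ is attained and writing the viscosity inequalities yields
\[
\eta\leq \ov H_{\bar\imath}\bigl(\bar y,v(\bar y,\bar t),\bar p\bigr)-\ov H_{\bar\imath}\bigl(\bar x,u(\bar x,\bar t),\bar p\bigr).
\]
On the compact set of $(r,p)$ fixed by the $L^\infty$ bounds and by $L$, \eqref{mod-hbarre} bounds the right-hand side by $\omega_R(|\bar x-\bar y|)+\omega_R(|u(\bar x,\bar t)-v(\bar y,\bar t)|)$. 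The monotonicity \eqref{A3} inherited by $\ov H_i$ (Proposition~\ref{prop-hbarre} iv)) disposes of the second term in the standard way, and letting $\alpha\to 0$, then $\eta\to 0$, produces the comparison inequality
\[
\max_{1\leq j\leq M}\sup_{\R^N\times[0,T]}(u_j-v_j)\leq \max_{1\leq j\leq M}\sup_{\R^N}(u_j(\cdot,0)-v_j(\cdot,0))^+.
\]

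\emph{Step 2 (solutions for Lipschitz data and extension to $BUC$).} When $u_0$ is Lipschitz, the functions $u_0(x)\pm Ct$ are respectively a super- and a subsolution of \eqref{HJ0} for $C$ large enough (depending on $|\ov H_i|$ over a compact set), and Perron's method, justified by Step~1 and the continuity of $\ov H_i$ (Proposition~\ref{prop-hbarre} i)), produces a solution trapped between these barriers. Time-Lipschitzness follows from a semigroup comparison argument as in the proof of Proposition~\ref{PrUC}; space-Lipschitzness is then read off the equation via the coercivity of $\ov H_i$ (Proposition~\ref{prop-hbarre} ii)), and Step~1 applies. For a $BUC$ datum $u_0$, approximate uniformly by Lipschitz $u_0^\gamma$ with $|u_0-u_0^\gamma|_\infty\leq\gamma$; the corresponding Lipschitz solutions $u^\gamma$ form an $L^\infty$-Cauchy sequence by Step~1, their uniform limit $u$ solves \eqref{HJ0} by stability, and is $BUC$ by the three-term estimate $|u(x,t)-u(y,s)|\leq 2\gamma+|u^\gamma(x,t)-u^\gamma(y,s)|$. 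Uniqueness in $BUC$ is obtained by regularising sub- and supersolutions via sup/inf-convolution in $x$: these yield Lipschitz functions satisfying perturbed versions of \eqref{HJ0} whose error vanishes with the convolution parameter (thanks to the $x$-modulus of \eqref{mod-hbarre}), and Step~1 applies once more.

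The principal difficulty is Step~1: the absence of the $(1+|p|)$ factor in \eqref{mod-hbarre} forces a restriction to the Lipschitz class before any doubling-of-variables argument can succeed; everything else is a routine reorganisation of the techniques already employed in Propositions~\ref{PrComp} and~\ref{PrUC}.
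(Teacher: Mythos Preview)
Your overall strategy---prove comparison first in a Lipschitz class and then extend to $BUC$ by approximation---is exactly the paper's, and your Step~1 and the construction of Lipschitz solutions in Step~2 are essentially what the paper does. There is, however, a genuine gap in your final move, the passage to $BUC$ uniqueness via sup/inf-convolution.

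The issue is that the $x$-modulus in \eqref{mod-hbarre} is $\omega_R$, and if you trace the proof of Proposition~\ref{prop-hbarre}~i) you see $\omega_R(l)\geq \omega\big((1+R+L_R)\,l\big)$, where $L_R$ is the Lipschitz constant of the corrector (controlled only through the coercivity \eqref{H1}ii), with no rate). After sup-convolving a merely $BUC$ subsolution $u$, the test gradients at a point $x_0$ satisfy $|p|=|x_0-x^*|/\eps$, while the displacement is $|x_0-x^*|\le c\sqrt{\eps}$; hence $R\sim|p|$ can blow up like $1/\sqrt{\eps}$. The product $R\cdot|x_0-x^*|$ stays bounded (and even tends to $0$ thanks to the $BUC$ modulus of $u$), but the extra factor $L_R$ has no a~priori growth control, so you cannot conclude that $\omega_R(|x_0-x^*|)\to 0$. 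Thus the claim ``whose error vanishes with the convolution parameter'' is not justified under the stated hypotheses.

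The paper sidesteps this entirely by observing that your own displayed inequality in Step~1,
\[
\frac{|\bar x-\bar y|^2}{2\alpha}\le u_i(\bar x,\bar t)-u_i(\bar y,\bar t)\le L|\bar x-\bar y|,
\]
uses only the Lipschitz continuity of \emph{one} of the two functions (here $u$). With this ``one Lipschitz suffices'' version of Step~1 in hand, the $BUC$ comparison is obtained by a sandwich: given a $BUC$ sub\-solution $u$ and supersolution $v$, pick Lipschitz $u_0^\gamma$ with $u_0^\gamma\le u_0\le u_0^\gamma+\gamma$, let $u^\gamma,v^\gamma$ be the Lipschitz \emph{solutions} with data $u_0^\gamma$ and $u_0^\gamma+\gamma$, and compare $u$ with $u^\gamma$ (Lipschitz), $v^\gamma$ (Lipschitz) with $v$, and $u^\gamma$ with $v^\gamma$, to get $u\le u^\gamma+\gamma\le v^\gamma+\gamma\le v+2\gamma$. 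No convolution, no blow-up of $R$. Your existence argument already produces these Lipschitz solutions $u^\gamma$; you should reuse them for uniqueness rather than switching to sup/inf-convolution.
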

\begin{proof}
The difficulty here is that the comparison principle for the limit system
\eqref{comp-barre} is not a straightforward consequence of Proposition
\ref{prop-hbarre}. Indeed, the regularity of  the Hamiltonians $\ov H_i$ is weaker
than \eqref{H1} (in particular compare \eqref{H1}iv) and \eqref{mod-hbarre}).
To prove the comparison principle we first prove comparison in the case
where either the subsolution or the supersolution is bounded Lipschitz continuous
and then we proceed by approximation.\par
Suppose that $u_0$ is bounded Lipschitz continuous, that $u$ is a bounded subsolution
and $v$ a bounded supersolution  of \eqref{HJ0} and that $u,$ for instance,
is Lipschitz continuous (with constant $L$).
Arguing as in Proposition \ref{PrComp} and looking carefully at the proof of
Proposition \ref{prop-hbarre} (see in particular \eqref{sup-lip}),
it follows that the second estimate in \eqref{mie21} could be replaced by
\begin{eqnarray*}
|\ov p|\leq L,
\end{eqnarray*}
and therefore, setting $R=|v|_\infty + 2M+L,$ from \eqref{mod-hbarre},
\eqref{T111} becomes
\begin{eqnarray*}
\mathcal{T}_1\leq \omega_R (L\alpha).
\end{eqnarray*}
The term $\mathcal{T}_2$ does not exist since $\ov H_{\ov j}$ does not depend
on $t.$ We deal with $\mathcal{T}_3$ using again \eqref{mod-hbarre} and
$\mathcal{T}_4\leq 0$ as in the proof of  Proposition \ref{prop-hbarre}.
The rest of the proof is the same (even easier since $u(\cdot ,0)$ is
Lipschitz continuous). It follows that we have comparison between Lispchitz
continuous sub and supersolutions. In particular, by Perron's method,
for any Lipschitz continuous $u_0,$ there exists a unique Lipschitz
continuous solution $u$ of \eqref{HJ0}. Moreover, repeating the beginning
of the proof of Proposition \ref{PrUC}, we obtain that \eqref{comp-max}
holds, i.e.
\begin{eqnarray}\label{val-ab}
\mathop{\rm max}_{1\leq j\leq M}\mathop{\rm sup}_{\R^N\times [0,T]} u_j-v_j
\leq \mathop{\rm max}_{1\leq j\leq M}\mathop{\rm sup}_{\R^N}
\left(u_{j}(\cdot,0)-v_{j}(\cdot ,0)\right)^+
\end{eqnarray}
if $u$ is a subsolution and $v$ a supersolution of \eqref{HJ0} and
either $u$ or $v$ is Lipschitz continuous.

Now, consider the case when $u_0$ is $BUC.$
Let $u$ (respectively $v$) be a $BUC$
subsolution (respectively supersolution) of \eqref{HJ0}.
For all $\gamma>0,$ there exists a Lipschitz continuous function $u_0^\gamma$
such that
\begin{eqnarray}\label{approx}
u_0^\gamma \leq u_0 \leq u_0^\gamma+\gamma \quad {\rm in} \ \R^N.
\end{eqnarray}
Let $u^\gamma$ (respectively $v^\gamma$) be the Lipschitz continuous solution to \eqref{HJ0} with
initial data $u_0^\gamma$ (respectively $u_0^\gamma+\gamma$).
By comparison in the Lipschitz case, $u^\gamma\leq v^\gamma.$
From \eqref{val-ab} and \eqref{approx}, it follows
\begin{eqnarray*}
u\leq u^\gamma +\gamma \leq v^\gamma +\gamma\leq v+2\gamma \quad {\rm in} \
\R^N\times [0,T].
\end{eqnarray*}
Since the previous inequality is true for all $\gamma>0,$
we obtain the desired comparison $u\leq v.$ Note that we obtain the existence
and the uniqueness of a $BUC$ solution as a
byproduct of this latter proof.
\end{proof}

\begin{thm}
\label{HT} Assume \eqref{H1},  \eqref{A3} and \eqref{H2}.
 The viscosity solution $u^\eps$ of \eqref{HJi} converges locally  uniformly
on $\R^N\times [0,T]$ to the viscosity solution $u\in BUC(\R^N\times [0,T])$ of \eqref{HJ0}.
\end{thm}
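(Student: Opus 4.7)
The plan is to combine the uniform estimates of Proposition \ref{PrUC} with an adapted perturbed test function argument in the spirit of Evans, the main novelty being that the argument has to be carried out at the level of a system. By Proposition \ref{PrUC}, the family $(u^\eps)_{\eps>0}$ is equibounded and equicontinuous on $\R^N\times[0,T]$ with bounds and modulus of continuity independent of $\eps$. Therefore Ascoli-Arzel\`a yields, for every sequence $\eps_n\to 0$, a subsequence (still denoted $\eps_n$) along which $u^{\eps_n}\to\tilde u$ locally uniformly for some $\tilde u\in BUC(\R^N\times[0,T])$ with $\tilde u(\cdot,0)=u_0$. The goal is to show that any such $\tilde u$ is a viscosity solution of the homogenized system \eqref{HJ0}; the comparison principle of Proposition \ref{comp-barre} will then identify $\tilde u$ with the unique $BUC$ solution $u$, independent of the subsequence, giving locally uniform convergence of the whole family.

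I would prove the subsolution property for $\tilde u$ (the supersolution one is symmetric). Fix $i\in\{1,\dots,M\}$, $\phi\in C^1$, and a strict local maximum point $(x_0,t_0)$, $t_0>0$, of $\tilde u_i-\phi$. Set $\bar r=\tilde u(x_0,t_0)$, $\bar p=D\phi(x_0,t_0)$, and let $v=v_{i}(\cdot\,;x_0,\bar r,\bar p)$ be the Lipschitz periodic corrector provided by Proposition \ref{PropCP}, which solves
\[
H_i(x_0,y,\bar r,\bar p+D_y v(y))=\ov H_i(x_0,\bar r,\bar p),\qquad y\in\T^N,
\]
in the viscosity sense. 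The natural perturbed test function is $\phi^\eps(x,t)=\phi(x,t)+\eps v(x/\eps)$; since $\eps v(\cdot/\eps)\to 0$ uniformly and $u^{\eps_n}\to\tilde u$ locally uniformly, $u^{\eps_n}_i-\phi^{\eps_n}$ attains a local maximum at some $(x_n,t_n)\to(x_0,t_0)$, and the system-wide equicontinuity from Proposition \ref{PrUC} ensures that the \emph{entire vector} $u^{\eps_n}(x_n,t_n)$ converges to $\bar r\in\R^M$, which is crucial because $H_i$ depends on all components.

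The hard part is passing to the limit in the subsolution inequality for $u^{\eps_n}_i$, since $v$ is only Lipschitz and cannot be used directly as a $C^1$ test. My plan is the standard doubling-of-variables device: for small $\delta>0$, consider
\[
\Psi_\delta(x,t,y)=u^{\eps_n}_i(x,t)-\phi(x,t)-\eps_n v(y)-\frac{|x/\eps_n-y|^2}{2\delta}
\]
on a neighborhood of $(x_0,t_0,x_0/\eps_n)$, using periodicity of $v$ to reduce $y$ to $\T^N$. At a maximum point $(x^*,t^*,y^*)$, freezing $y=y^*$ yields the viscosity subsolution inequality for $u^{\eps_n}$ at $(x^*,t^*)$, while freezing $(x,t)=(x^*,t^*)$ yields the viscosity supersolution inequality for the corrector $v$ at $y^*$, both involving the same vector $\bar p+q_\delta$ with $q_\delta=(x^*/\eps_n-y^*)/(\delta\eps_n)$. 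The Lipschitz bound on $v$ from Proposition \ref{PropCP} forces $|q_\delta|\le 2L_R$ and $|x^*/\eps_n-y^*|\to 0$ as $\delta\to 0$. Subtracting the two inequalities, using \eqref{H1}iii)--iv) together with the convergences $u^{\eps_n}(x_n,t_n)\to\bar r$ and $(x_n,t_n)\to(x_0,t_0)$, and sending first $\delta\to 0$ and then $\eps_n\to 0$, the oscillating argument $x^*/\eps_n$ cancels out between the two Hamiltonians and one arrives at
\[
\partial_t\phi(x_0,t_0)+\ov H_i(x_0,\bar r,\bar p)\le 0.
\]

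The main obstacle is precisely this doubling step: the non-smoothness of the corrector $v$ has to be absorbed by exploiting that $v$ itself satisfies the cell equation in the viscosity sense, and the coupling of the system enters through the vector $u^{\eps_n}(x^*,t^*)$ appearing in $H_i$, which is handled by the equicontinuity of the \emph{full} system supplied by Proposition \ref{PrUC}. Once the subsolution property is established, reversing inequalities and considering local minima gives the supersolution property for $\tilde u$, and an application of Proposition \ref{comp-barre} concludes.
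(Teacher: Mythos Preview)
Your argument is correct and follows the same perturbed-test-function-with-doubling scheme as the paper, but there is one genuine methodological difference worth pointing out. The paper works with the half-relaxed limits $\ov u,\un u$ rather than extracting a subsequence via Ascoli--Arzel\`a. With half-relaxed limits one only knows the one-sided bound $u_j^\eps(x_{\eps,\alpha},t_{\eps,\alpha})\le \ov u_j(\ov x,\ov t)+\delta/2$ for the components $j\neq i$, and the paper must therefore invoke the monotonicity assumption \eqref{A3} explicitly (their term $\mathcal T_2$) to replace the vector $u^\eps(x_{\eps,\alpha},t_{\eps,\alpha})$ by a vector whose first component dominates. You instead exploit the uniform modulus of continuity of Proposition~\ref{PrUC}~ii) to get full locally uniform convergence of the \emph{whole} vector $u^{\eps_n}(x^*,t^*)\to\bar r$, which lets you pass to the limit in the $r$-slot of $H_i$ directly via \eqref{H1}iii), bypassing the monotonicity trick at this stage. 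The paper itself notes that this route is available (see the paragraph just after the definition of $\ov u,\un u$ and Remark~\ref{corr-non-lip}.2); what you lose is some robustness, since your argument relies on the a~priori equicontinuity estimates, whereas the half-relaxed-limit proof would go through with only the uniform $L^\infty$ bound \eqref{PrUC1}. Both routes still need \eqref{A3} indirectly, of course, through Proposition~\ref{comp-barre} for uniqueness of the limit.
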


\begin{proof}
By Proposition \ref{PrUC}  there exists a
continuous solution $u^\eps$ of \eqref{HJi}
which is bounded independently of $\eps.$ It follows that we can define the half-relaxed
limits
\begin{eqnarray*}
\ov u (x,t) = \mathop{\rm lim\,sup}_{\eps\to 0, (x_\eps, t_\eps)\to(x,t)} u^\eps(x_\eps, t_\eps)
\ \ \ {\rm and} \ \ \
\underline u (x,t) = \mathop{\rm lim\,inf}_{\eps\to 0, (x_\eps, t_\eps)\to(x,t)} u^\eps(x_\eps, t_\eps).
\end{eqnarray*}
Let us mention at this step that we could use Ascoli's theorem in view of the
equicontinuity property  of Proposition \ref{PrUC} ii) to obtain a limit for
$u^\eps$ along a subsequence. We choose to use the
half-relaxed limits since it is not much more complicated and it does not require
uniform moduli of continuity for the $u^\eps$'s.

We first show that $\ov u$ is a viscosity subsolution of the system \eqref{HJ0}.
We assume that there exist  $j\in\{1,\dots,M\}$ and $\phi\in C^1$ such that
$\ov{u}_j-\phi$ has a strict maximum point at some $(\ov x,\ov t)$ with
$\ov t>0$ and
$\ov u_j(\ov x,\ov t)=\phi(\ov x,\ov t).$
We assume w.l.o.g. that $j=1$. By Proposition \ref{PropCP}, there exists a
corrector $v$ for $(\ov x, \ov u(\ov x,\ov t), D\phi(\ov x,\ov t))$, i.e. a viscosity solution of
\begin{equation}\label{T2-new}
    H_1(\ov x,y,\ov u(\ov x,\ov t),D\phi(\ov x,\ov t)+Dv(y))=\ov H_1(\ov x,\ov u(\ov x,\ov t),D\phi(\ov x,\ov t))\qquad y\in\T^N.
\end{equation}
Define the ``perturbed test-function''
\begin{equation}\label{T3-new}
   \phi^{\eps,\a}(x,y,t)=\phi(x,t)+\eps v\left(\frac{y}{\eps}\right)+\frac{|x-y|^2}{\alpha^2}.
\end{equation}
By classical results on viscosity solutions (see \cite[Lemma 4.3]{barles94} or
\cite{bcd97}), we have, since $\ov{u}_1-\phi$ has a strict maximum point at $(\ov x,\ov t),$
up to extract subsequences, there exist
$(x_{\eps,\alpha}, y_{\eps,\alpha}, t_{\eps,\alpha})\in\R^N\times\R^N\times (0,T]$
and $(x_{\eps}, t_{\eps})\in\R^N\times (0,T]$ such that
$(x_{\eps,\alpha}, y_{\eps,\alpha}, t_{\eps,\alpha})$ is a local maximum
of ${u}^\eps_1(x,t)-\phi^{\eps,\a}(x,y,t)$ and
\begin{eqnarray*}
&& (x_{\eps,\alpha}, y_{\eps,\alpha}, t_{\eps,\alpha}) \to (x_{\eps}, x_{\eps},t_{\eps})
\ \ \ {\rm as} \ \alpha\to 0, \\
&& (x_{\eps}, t_{\eps}) \to (\ov x,\ov t)\ \ \ {\rm as} \ \eps\to 0, \\
&& \mathop{\rm lim}_{\eps\to 0}\,  \mathop{\rm lim}_{\alpha\to 0}
{u}^\eps_1(x_{\eps,\alpha},t_{\eps,\alpha})= u_1 (\ov x, \ov t).
\end{eqnarray*}
Since $u^\eps_1(x,t)- \phi^{\eps,\alpha}(x,y_{\eps,\alpha},t)$ has a maximum point at
$(x_{\eps,\alpha},t_{\eps,\alpha})$ and $u^\eps$ is a subsolution of \eqref{HJi},
setting $\displaystyle p_{\eps,\a}=2\frac{x_{\eps,\alpha}-y_{\eps,\alpha}}{\alpha^2},$
we get
\begin{equation}\label{visc66bis}
\phi_t(x_{\eps,\alpha},t_{\eps,\alpha})
+H_1(x_{\eps,\alpha},\frac{x_{\eps,\alpha}}{\eps},{u}^\eps(x_{\eps,\alpha},t_{\eps,\alpha}),
D\phi(x_{\eps,\alpha},t_{\eps,\alpha})+ p_{\eps,\a})
\leq 0.
\end{equation}
Since  $v$ is a
supersolution of \eqref{T2-new} and $y\mapsto v(y)-\psi^{\eps,\alpha}(\eps y)$
 has a minimum point at $y_{\eps,\a}/\eps$ with
\begin{eqnarray*}
\psi^{\eps,\alpha}(y)=-\frac{1}{\eps}\left(\frac{|x_{\eps,\a}-y|^2}{\alpha^2}+
\phi(x_{\eps,\a}, t_{\eps,\a})-u^\eps_1(x_{\eps,\a}, t_{\eps,\a})\right),
\end{eqnarray*}
we get
\begin{equation}
H_1(\ov x,\frac{y_{\eps,\alpha}}{\eps},\ov u(\ov x,\ov t),D\phi(\ov x,\ov t)
+p_{\eps,\a})
\geq  \ov H_1(\ov x,\ov u(\ov x,\ov t),D\phi(\ov x,\ov t)). \label{visc66}
\end{equation}
Note that the corrector $v$ is Lipschitz continuous by coercivity of $H_1$
(see Proposition \ref{PropCP}). Therefore,
from \eqref{sup-lip}, we have
\begin{eqnarray}\label{bornitude-p}
 |p_{\eps,\a}|\leq |Dv|_\infty \leq L_R \ \ \ {\rm with} \ R=|D\phi(\ov x,\ov t)|+|\ov u|_\infty.
\end{eqnarray}

By \eqref{visc66bis} and \eqref{visc66}, we have
\begin{eqnarray} \label{eg876}
&&\phi_t (\ov x,\ov t)+\ov H_1(\ov x, \ov u(\ov x,\ov t),D\phi(\ov x,\ov t))\\
&\leq & \phi_t (\ov x,\ov t)-\phi_t (x_{\eps,\alpha},t_{\eps,\alpha})\nonumber\\
& & + H_1(\ov x,\frac{y_{\eps,\alpha}}{\eps},\ov u(\ov x,\ov t),
D\phi(\ov x,\ov t)+p_{\eps,\a})\nonumber\\
&& - H_1(x_{\eps,\alpha},\frac{x_{\eps,\alpha}}{\eps}, {u}^\eps(x_{\eps,\alpha},t_{\eps,\alpha}),
D\phi(x_{\eps,\alpha},t_{\eps,\alpha})+ p_{\eps,\a}).\nonumber
\end{eqnarray}

Since $(x_{\eps,\alpha},t_{\eps,\alpha})\to (\ov x,\ov t)$ and $\phi$ is smooth we get
\begin{equation}\label{esT1}
\mathop{\rm lim}_{\eps\to 0}\,  \mathop{\rm lim}_{\alpha\to 0}
\phi_t (\ov x,\ov t)-\phi_t (x_{\eps,\alpha},t_{\eps,\alpha})=0.
\end{equation}
To estimate the second term of the right-hand side we set
\begin{eqnarray*}
\mathcal{T}_1&=&H_1(\ov x,\frac{y_{\eps,\alpha}}{\eps},\ov u(\ov x,\ov t),D\phi(\ov x,\ov t)+ p_{\eps,\a})
- H_1(x_{\eps,\alpha},\frac{x_{\eps,\alpha}}{\eps},\ov u(\ov x,\ov t),D\phi(\ov x,\ov t)+ p_{\eps,\a}), \\
\mathcal{T}_2&=& H_1(x_{\eps,\alpha},\frac{x_{\eps,\alpha}}{\eps},\ov u(\ov x,\ov t),D\phi(\ov x,\ov t)+ p_{\eps,\a})
-H_1(x_{\eps,\alpha},\frac{x_{\eps,\alpha}}{\eps}, {u}^\eps(x_{\eps,\alpha},t_{\eps,\alpha}),
D\phi(\ov x ,\ov t)+ p_{\eps,\a}),\\
\mathcal{T}_3&=&H_1(x_{\eps,\alpha},\frac{x_{\eps,\alpha}}{\eps}, {u}^\eps(x_{\eps,\alpha},t_{\eps,\alpha}),
D\phi(\ov x ,\ov t)+ p_{\eps,\a})\\
&& -H_1(x_{\eps,\alpha},\frac{x_{\eps,\alpha}}{\eps}, {u}^\eps(x_{\eps,\alpha},t_{\eps,\alpha}),
D\phi(x_{\eps,\alpha},t_{\eps,\alpha})+ p_{\eps,\a}).
 \end{eqnarray*}
From \eqref{H1}iv) and \eqref{bornitude-p},
\begin{eqnarray*}
\mathcal{T}_1 &\leq& \omega \big( (1+|D\phi(\ov x,\ov t)+ p_{\eps,\a}|)
(|\ov x -x_{\eps,\alpha}|+\frac{|x_{\eps,\alpha}-y_{\eps,\alpha}|}{\eps})\big)\\
&\leq& \omega \big( (1+R+L_R)(|\ov x -x_{\eps,\alpha}|+\frac{L_R\alpha^2}{\eps})\big)
\end{eqnarray*}
and therefore
\begin{eqnarray*}
 \mathop{\rm lim}_{\eps\to 0}\,  \mathop{\rm lim}_{\alpha\to 0}\mathcal{T}_1 =0.
\end{eqnarray*}

To deal with $\mathcal{T}_2,$ we use the monotonicity assumption \eqref{A3}.
Let $\delta >0.$
At first, up to extract some subsequences, by definition of $\ov u,$
we can assume that for $\alpha, \eps$
small enough with $\alpha <\!\! < \eps,$ we have
\begin{eqnarray*}
u_j^\eps(x_{\eps,\alpha},t_{\eps,\alpha})- \ov u_j(\ov x,\ov t)\leq \frac{\delta}{2}
\ \ \ {\rm for} \ 2\leq j\leq M.
\end{eqnarray*}
It follows that
\begin{eqnarray*}
{\rm max}\big\{  \ov u_1(\ov x,\ov t)+\delta- \ov u_1(\ov x,\ov t),
u_2^\eps(x_{\eps,\alpha},t_{\eps,\alpha}) -\ov u_2(\ov x,\ov t),\cdots,
u_M^\eps(x_{\eps,\alpha},t_{\eps,\alpha}) -\ov u_M(\ov x,\ov t)
\big\}=\delta
\end{eqnarray*}
is achieved for the first component. Set $r_\d=(\ov u_1(\ov x,\ov t)+\delta,
u_2^\eps(x_{\eps,\alpha},t_{\eps,\alpha}),\cdots,
u_M^\eps(x_{\eps,\alpha},t_{\eps,\alpha}))$, then by \eqref{A3}
\begin{eqnarray*}
H_1(x_{\eps,\alpha},\frac{x_{\eps,\alpha}}{\eps},\ov u(\ov x,\ov t),
D\phi(\ov x,\ov t)+ p_{\eps,\a})-H_1(x_{\eps,\alpha},\frac{x_{\eps,\alpha}}{\eps}, r_\d,
D\phi(\ov x ,\ov t)+ p_{\eps,\a})
\leq 0.
\end{eqnarray*}
Then
\begin{eqnarray*}
\mathcal{T}_2\leq
H_1(x_{\eps,\alpha},\frac{x_{\eps,\alpha}}{\eps},r_\d,D\phi(\ov x,\ov t)+p_{\eps,\a})
-H_1(x_{\eps,\alpha},\frac{x_{\eps,\alpha}}{\eps}, {u}^\eps(x_{\eps,\alpha},t_{\eps,\alpha}),
D\phi(\ov x ,\ov t)+ p_{\eps,\a}):= \mathcal{T}_4.
\end{eqnarray*}

To prove that
\begin{eqnarray} \label{limlim}
 \mathop{\rm lim}_{\eps\to 0}\,  \mathop{\rm lim}_{\alpha\to 0}\mathcal{T}_3 = 0
\ \ \ {\rm and} \ \ \
\mathop{\rm lim}_{\delta\to 0}\,\mathop{\rm lim}_{\eps\to 0}\,  \mathop{\rm lim}_{\alpha\to 0}
 \mathcal{T}_4 = 0,
\end{eqnarray}
we use the uniform continuity of $H_1$ on compact subsets.
We have
\begin{eqnarray*}
\mathop{\rm lim}_{\eps\to 0}\,  \mathop{\rm lim}_{\alpha\to 0}\,
(x_{\eps,\alpha}, {u}^\eps(x_{\eps,\alpha},t_{\eps,\alpha}),D\phi(x_{\eps,\alpha},t_{\eps,\alpha}))
= (\ov x, \ov u(\ov x,\ov t), D\phi(\ov x ,\ov t))
\end{eqnarray*}
and
\begin{eqnarray*}
\mathop{\rm lim}_{\delta\to 0}\,\mathop{\rm lim}_{\eps\to 0}\,  \mathop{\rm lim}_{\alpha\to 0}\,
r_\delta = \ov u(\ov x,\ov t).
\end{eqnarray*}
Since $x_{\eps,\a}, y_{\eps,\a}\to \ov x$ we have
that $x_{\eps,\a}, y_{\eps,\a}$ stay in some ball $\ov B(\ov x,\ov R)$.
Hence choosing
$K=\ov B(\ov x,\ov R)\times \T^N\times
[-|\ov u|_\infty-1, |\ov u|_\infty+1]^M\times \ov B(0, |D\phi(\ov x,\ov t)|+L_R+1),$
by uniform continuity of $H_1$ on $K,$ \eqref{limlim} holds. Note that
the periodicity of $H_1$ allows
to deal with ${x_{\eps,\alpha}}/{\eps}$ which is not bounded.

Finally, sending $\alpha\to 0$ at first, then $\eps\to 0$ and finally
$\delta\to 0,$ we conclude that the right-hand side of \eqref{eg876}
is nonpositive which proves that $\ov u$ is a subsolution of
\eqref{HJ0}.

We prove that $\underline u$ is a viscosity supersolution of \eqref{HJ0} in a
similar way. From Proposition \ref{PrComp}, we then obtain that $\ov u\leq
\underline u$ in $\R^N\times [0,T].$ It follows that $\ov u=\underline u:=u$
where $u$ is the (local) uniform limit of the $u^\eps$'s.
\end{proof}

\begin{rem}\label{corr-non-lip} \ \\
1. As mentioned above, the coercivity of the Hamiltonians plays a crucial role: it ensures
the Lipschitz continuity of the correctors which allows us to deal with
$\mathcal{T}_3$ and $\mathcal{T}_4$ with weak regularity assumptions with respect to $(r,p)$
in \eqref{H1}.
When the Hamiltonians are not coercive anymore (and therefore the corrector
is not necessarily Lipschitz continuous), the proof is more delicate.
A way to solve this problem is to use the ideas of
Barles \cite{barles07} and his ''$F^k$-trick'' (see \cite[Lemma 2.1 and
Theorem  2.1]{barles07}). \\
2. In the Lipschitz case (when $u_0$ is Lipschitz continuous), the above proof
can be done in a simpler way using the uniform Lipschitz estimates on $u^\eps$ given
by Proposition \ref{PrUC}.
\end{rem}

\section{Example}\label{Ex}
We first describes a class of   systems \eqref{HJS} which satisfy \eqref{A3}. We assume that the Hamiltonians $H_j$ satisfy the following
assumption
(see \cite{el91})
\begin{equation}\label{A1}
\begin{array}{c}
\text{There  exists $c_{ij}\in\R,$ $1\leq i,j\leq M,$ s.t. $\sum_{j=1}^M c_{ji}\ge  0$ and}\\[4pt]
\text{for any $(x,y,r,p)\in\R^N\times\R^N\times\R^M\times \R^N$, $\d>0,$}\\[4pt]
 c_{ji} \d\le H_i(x,y,r+\d e_j,p)- H_i(x,y,r,p)\le 0\qquad \text{if $j\neq i$},\\[4pt]
  c_{ii} \d \le H_i(x,y,r+\d e_i,p)- H_i(x,y,r,p)
\end{array}
\end{equation}
where $(e_1,\cdots ,e_n)$ is the canonical basis of $\R^M.$
Note that necessarily $c_{ji}\leq 0$
for $i\not= j$ and $c_{ii}\geq  0$. \par

In the next proposition we prove that the  assumption \eqref{A1}  implies the      monotonicity
 condition \eqref{A3}
\begin{prop}
Condition \eqref{A1} implies  \eqref{A3}.
\end{prop}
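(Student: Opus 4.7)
The plan is to estimate the difference $H_j(x,y,r,p)-H_j(x,y,s,p)$ by moving from $s$ to $r$ along a path that changes one coordinate of the $r$-variable at a time, applying the one-sided estimates of \eqref{A1} to each infinitesimal step, and finally summing.

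Fix $(x,y,p)$, set $\delta_k=r_k-s_k$ for $k=1,\dots,M$, and write $j$ for the index realizing $\delta_j=\max_k\delta_k\ge 0$. Split the indices as $J_+=\{k:\delta_k\ge 0\}$ (which contains $j$) and $J_-=\{k:\delta_k<0\}$, and introduce the intermediate point $r'=s+\sum_{k\in J_+}\delta_k e_k$. I would first pass from $s$ to $r'$ by adding $\delta_k e_k$ one coordinate at a time; applying the \emph{lower} bounds of \eqref{A1} to $H_j$ (so the relevant coefficients are $c_{kj}$, with $c_{jj}\ge 0$ and $c_{kj}\le 0$ for $k\ne j$), I obtain telescopically
\begin{equation*}
H_j(x,y,r',p)-H_j(x,y,s,p)\ge \sum_{k\in J_+}c_{kj}\,\delta_k.
\end{equation*}
Then I would go from $r'$ to $r$ by \emph{decreasing} the coordinates $k\in J_-$ (note $j\notin J_-$); for any such $k\ne j$ the upper bound in \eqref{A1} says that $H_j$ is nonincreasing in $r_k$, so each of these steps can only make $H_j$ larger, giving
\begin{equation*}
H_j(x,y,r,p)\ge H_j(x,y,r',p).
\end{equation*}

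It then remains to check that $\sum_{k\in J_+}c_{kj}\,\delta_k\ge 0$. For $k\in J_+$ with $k\ne j$ one has $c_{kj}\le 0$ and $0\le\delta_k\le\delta_j$, hence $c_{kj}\delta_k\ge c_{kj}\delta_j$; since $c_{jj}\delta_j=c_{jj}\delta_j$ as well, the sum is bounded below by $\delta_j\sum_{k\in J_+}c_{kj}$. Writing
\begin{equation*}
\delta_j\sum_{k\in J_+}c_{kj}=\delta_j\sum_{k=1}^{M}c_{kj}-\delta_j\sum_{k\in J_-}c_{kj},
\end{equation*}
the first term is nonnegative by the standing assumption $\sum_{k}c_{kj}\ge 0$ combined with $\delta_j\ge 0$, while the second term is nonnegative because $c_{kj}\le 0$ for every $k\in J_-$ (these $k$ are all distinct from $j$). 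Chaining the two inequalities above yields $H_j(x,y,r,p)\ge H_j(x,y,s,p)$, which is exactly \eqref{A3}.

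The only real subtlety is the handling of the coordinates with $\delta_k<0$: the one-sided form of \eqref{A1} (stated only for $\delta>0$) forces the two-step interpolation, but for $k\in J_-$ we have $k\ne j$ and the upper bound $H_j(\cdot+\delta e_k)-H_j(\cdot)\le 0$ translates into monotone behaviour under subtraction, which is exactly what is needed. The rest is a bookkeeping argument that combines $\delta_k\le\delta_j$, the sign patterns $c_{jj}\ge 0$, $c_{kj}\le 0$ ($k\ne j$), and the column sum condition $\sum_k c_{kj}\ge 0$.
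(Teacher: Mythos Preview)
Your proof is correct and follows essentially the same approach as the paper: a telescoping decomposition of $H_j(x,y,r,p)-H_j(x,y,s,p)$ changing one coordinate at a time, combined with the one-sided bounds from \eqref{A1}, the sign pattern $c_{kj}\le 0$ for $k\ne j$, and the column-sum condition $\sum_k c_{kj}\ge 0$. The only cosmetic difference is that the paper telescopes in the fixed order $1,\dots,M$ and bounds every increment directly by $c_{kj}(r_j-s_j)$ (treating the cases $\delta_k\ge 0$ and $\delta_k<0$ in one line), whereas you first handle the indices in $J_+$ and then those in $J_-$; the underlying estimates are identical.
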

\begin{proof}
Assume that $r_j-s_j=\max_{k=1,\dots,M}\{r_k-s_k\}\ge 0$. For simplicity, we
drop the dependence in $(x,y,p)$ in $H(x,y,r,p)$ in the proof of the proposition since
these variables do not play any role here.
We have
\begin{eqnarray*}
&& H_j(r)-H_j(s) \\
&=& H_j(r_1, r_2, \cdots , r_j, \cdots , r_M) -   H_j(s_1, s_2, \cdots , s_j, \cdots , s_M) \\
&=& H_j(r_1, r_2, \cdots , r_j, \cdots , r_M) -  H_j(s_1, r_2, \cdots , r_j, \cdots , r_M) \\
&& + H_j(s_1, r_2, \cdots , r_j, \cdots , r_M) -  H_j(s_1, s_2, r_3, \cdots ,r_j, \cdots , r_M) \\
&& + \cdots \\
&& + H_j (s_1,\cdots , s_{j-1}, r_j,\cdots r_M)- H_j (s_1,\cdots , s_{j}, r_{j+1},\cdots r_M) \\
&& + \cdots \\
&& + H_j (s_1,\cdots , s_{M-1}, r_M)- H_j (s_1, \cdots s_M).
\end{eqnarray*}
If $k\not= j,$
\begin{eqnarray*}
&& H_j(s_1, \cdots s_{k-1}, r_k,r_{k+1} \cdots , r_M) -  H_j(s_1, \cdots ,s_{k-1},s_k , r_{k+1}, \cdots , r_M)\\[5pt]
&\geq&
\left\{
\begin{array}{cc}
0 & {\rm if} \ r_k-s_k <0\\
c_{kj}(r_k-s_k) & {\rm if} \ r_k-s_k \geq 0
\end{array}\right.
\quad \geq  c_{kj}(r_j-s_j)
\end{eqnarray*}
since $c_{kj}\leq 0$ and $r_j-s_j=\max_{k=1,\dots,M}\{r_k-s_k\}\ge 0$. Moreover,
\begin{eqnarray*}
 H_j(s_1, \cdots s_{j-1}, r_j,r_{j+1} \cdots , r_M) -  H_j(s_1, \cdots s_{j-1},s_j , r_{j+1}, \cdots , r_M)
\geq c_{jj}(r_j-s_j).
\end{eqnarray*}
It follows
\begin{eqnarray*}
H_j(r)-H_j(s) \geq \sum_{k=1}^{M} c_{kj}(r_j-s_j)\geq 0
\end{eqnarray*}
as desired.
\end{proof}
\begin{rem}
Property \eqref{A3} is not equivalent to \eqref{A1}. More
precisely, if \eqref{A3} holds, the existence of $c_{ji}$ for $j\not= i$ is not always true
(the others assertions hold).
Indeed, for
$M=2,$ consider for instance $H_1 (r_1,r_2)= {\rm e}^{r_1-r_2}+2r_1 -r_2$
(and define $H_2$ symmetrically). Then
\begin{eqnarray*}
H_1( r_1+\delta , r_2+\mu)- H_1(r_1,r_2)=  {\rm e}^{r_1-r_2}({\rm  e}^{\delta-\mu} -1)+2\delta -\mu
\geq \delta
\end{eqnarray*}
when $\mu\leq \delta.$
This ensures \eqref{A3} with $\lambda_0=1.$ Nevertheless,
$H_1( r_1 , r_2+\mu)- H_1(r_1,r_2) \sim -(1+{\rm e}^{r_1-r_2}) \mu$ for small
$\mu$
and $1+{\rm e}^{r_1-r_2}$ is not bounded.
\end{rem}
A particular case of monotone systems are the    weakly coupled  systems \eqref{Ex1wk}.
For \eqref{Ex1wk}, assumption  \eqref{A1} is satisfied if
\begin{eqnarray}\label{cond-sur-c}
c_{ii}(x,y)\ge 0, \ \ c_{ji}(x,y)\le 0 \ \text{ for $j\neq i$}\ \ \text{ and}
\ \ \sum_{j=1}^M
c_{ji}(x,y)\geq 0
\end{eqnarray}
for any $x,y\in\R^N$, $i,j\in\{1,\dots,M\}$.
Let us consider a specific example of
weakly coupled system
for which it is possible to have an explicit
formula for the effective Hamiltonians. Consider the system
\[
\frac{\partial u_i}{\partial t}+ \left|\frac{\partial u_i}{\partial x}\right|
+\sum_{j=1}^M c_{ji}\left(\frac{x}{\eps}\right)u_j=0\qquad (x,t)\in \R\times[0,\infty).
\]
where  the $c_{ji}$'s satisfy \eqref{cond-sur-c}. The associated cell problems are
\begin{equation}\label{Ex1CP}
|p+v'(y)|+\sum_{j=1}^M c_{ji}(y)r_j=\l\qquad y\in [0,1], \lambda\in\R,
\end{equation}
for $i=1,\dots,M$.  We rewrite \eqref{Ex1CP} as
\begin{equation}\label{Ex1CP1}
|p+v'(y)|=\l+f(y)\qquad y\in [0,1]
\end{equation}
where $\displaystyle f(y)=-\sum_{j=1}^Mc_{ji}(y)r_j$.
The effective Hamiltonian for \eqref{Ex1CP1} is given by (see \cite{concordel96})
\begin{equation}\label{Ex1EH}
    \ov H(p)=\max\{-\min_{[0,1]}f,\, |p|-\int_0^1 f(y)dy\}
\end{equation}
and therefore we get the effective Hamiltonian for \eqref{Ex1CP}
\begin{equation}\label{Ex1F}
\ov H_i(r,p)=\max\left\{\max_{[0,1]}\sum_{j=1}^M c_{ji}(y)r_j,\, |p|+\sum_{j=1}^M r_j\int_0^1 c_{ji}(y)dy\right\}
\end{equation}

A natural question is that if the problem \eqref{HJ0} which arises in the homogenized
limit of the weakly coupled system \eqref{Ex1wk} is still of weakly coupled type. Whereas the answer is
positive if the coefficients $c_{ij}$ are independent of $y$, in general it is
not necessarily true.

\begin{prop} Assume that \eqref{H1} holds and
that the coefficients $c_{ij}$ in \eqref{Ex1wk} are constant and
satisfy \eqref{cond-sur-c}. Then
\begin{equation}\label{Ex1wk1}
   \ov H_i(r,p)=\ov H_i(p)+  \sum_{j=1}^M c_{ji}\,r_j
\end{equation}
where $\ov H_i(p)$ is the effective Hamiltonian of $H_i(x,p)$, i.e. the unique $\l\in \R$ for which the equation
\begin{equation}\label{Ex1wk2}
    H_i(y,p+Dv(y))=\ov H_i(p)\qquad y\in\T^N
\end{equation}
admits a viscosity solution.
\end{prop}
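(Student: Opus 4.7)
The plan is to exploit the very simple structure of the cell problem when the coupling coefficients are constants. By definition, $\ov H_i(r,p)$ is the unique value $\l$ for which the cell problem associated to the system \eqref{Ex1wk} admits a periodic viscosity solution $v$, namely
\begin{equation*}
H_i(y,p+Dv(y))+\sum_{j=1}^M c_{ji}\,r_j = \l \qquad y\in\T^N.
\end{equation*}
The key observation is that, since the $c_{ji}$ are constants (independent of $y$) and $r_j$ are fixed, the sum $S:=\sum_{j=1}^M c_{ji}r_j$ is just a real number. So the cell problem rewrites equivalently as
\begin{equation*}
H_i(y,p+Dv(y)) = \l - S \qquad y\in\T^N.
\end{equation*}

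First I would apply Proposition \ref{PropCP} to the scalar Hamiltonian $H_i(y,p+\cdot)$ (which inherits \eqref{H1}): it yields a unique real number $\mu=\ov H_i(p)$ for which the equation $H_i(y,p+Dv(y))=\mu$ has a periodic viscosity solution. Matching $\mu=\l-S$ forces $\l=\ov H_i(p)+S$, which is exactly the claimed formula \eqref{Ex1wk1}.

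Conversely, any $v$ that solves $H_i(y,p+Dv(y))=\ov H_i(p)$ in the viscosity sense automatically solves the system's cell problem with $\l=\ov H_i(p)+S$, so existence of a corrector at the value $\l$ given by \eqref{Ex1wk1} is immediate. The uniqueness part of Proposition \ref{PropCP} applied to the system then guarantees $\ov H_i(r,p)=\ov H_i(p)+S$.

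I do not expect any real obstacle here; the whole point is that a constant additive term commutes with the cell problem and can be absorbed into the ergodic constant. The proof reduces to two lines invoking Proposition \ref{PropCP}; no perturbed test function, no monotonicity manipulation, and no regularity subtlety is involved. (The parallel discussion in Section \ref{Ex} already makes it clear why the assumption that the $c_{ij}$ are constant — rather than merely periodic in $y$ — is essential: otherwise, the term $\sum c_{ji}(y)r_j$ cannot be pulled out of the cell problem and the decoupling fails, as the explicit formula \eqref{Ex1F} demonstrates.)
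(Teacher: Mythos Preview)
Your proposal is correct and follows essentially the same approach as the paper's own proof: write the cell problem, move the constant coupling term $\sum_j c_{ji}r_j$ to the right-hand side, and invoke the uniqueness of the ergodic constant from Proposition~\ref{PropCP} to identify $\ov H_i(r,p)-\sum_j c_{ji}r_j$ with $\ov H_i(p)$. The paper's argument is just a terser version of yours.
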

\begin{proof}
By definition, there exists a  viscosity solution to
\[H_i(y,p+Dv(y))+\sum_{j=1}^M c_{ji}r_j=\ov H_i(r,p),\qquad y\in\T^N\]
or equivalently to
\[H_i(y,p+Dv(y))=\ov H_i(r,p)- \sum_{j=1}^M c_{ji}r_j\qquad y\in\T^N.\]
By \eqref{Ex1wk2} and the uniqueness of the effective Hamiltonian, the
constant in the right hand side of  the previous equation \ is given by $\ov H_i(p)$,
hence the formula \eqref{Ex1wk1}.
\end{proof}

The following example
shows that if the coupling coefficients $c_{ij}$ are not constants, the limit system is not
 necessarily weakly coupled. Consider the 1-dimensional case \eqref{Ex1CP}.  Take $i=1$ and $r=(r_1,0,\dots,0)$, then
 \[\ov H_1(r,p)=\max\left\{\max_{[0,1]} c_{11}(y)r_1,\, |p|+  r_1\int_0^1 c_{11}(y)dy\right\}.\]
If $\max_{[0,1]} c_{11} =\a$, $\min_{[0,1]} c_{11}=\b$ and $\int_0^1 c_{11}(y)dy=\g$, with $\a>\g>\b\ge 0$, then
for $p\neq 0$ fixed
\[ \ov H_1(r,p)=\left\{
\begin{array}{ll}
\b r_1& {\rm if} \ r_1\le  |p|/(\b-\g),\\
\g r_1+|p|& {\rm if} \ |p|/(\b-\g)\le r_1\le |p|/(\a-\g),\\
\a r_1&  {\rm if} \ r_1\ge |p|/(\a-\g).
\end{array}
\right.\]
Then $H_1(r,p)$ is not a linear function of $r$ and therefore is not of weakly coupled type.\par
By the formula \eqref{Ex1F} it is possible to see another typical phenomenon in homogenization of Hamilton-Jacobi
equation, the presence of a flat part in the graph of effective Hamiltonian (see \cite{concordel96}, \cite{lpv86}).


\appendix
\section{}\label{Appendix}

\begin{proof}[Proof of Proposition \ref{PrComp}]
We first prove the comparison principle.
Define
\[
 \Psi(x, y,t,s,j)=u_j(x,t)-v_j(y,s)-\frac{|x-y|^2}{2\a}-\frac{|t-s|^2}{2\mu}-\b(|x|^2+|y|^2)-\eta t,
\]
where  $\a$, $\b$, $\mu,$ $\eta$ are positive constants. Since $u,v$ are bounded,
${\rm max}_j\sup_{(\R^N)^2\times [0,T]}\Psi$ is finite and achieved
at some $(\ov x,\ov y,\ov t, \ov s,\ov j).$

For all $j$ and $(x,t)\in\R^N,$ we have
\begin{equation*}
    u_j(x,t)-v_j(x,t) - 2\b|x|^2 -\eta t=\Psi(x,x,t,t,j)\le \Psi(\ov x,\ov y,\ov t,\ov s, \ov
       j)\leq u_{\ov j}(\ov x,\ov t)-v_{\ov j}(\ov y,\ov s).
\end{equation*}
If  $u_{\ov j}(\ov x,\ov t)-v_{\ov j}(\ov y,\ov s)\leq 0$ for all $\b, \eta>0,$
then the comparison holds. Therefore, we suppose that
\begin{eqnarray} \label{CP2}
u_{\ov j}(\ov x,\ov t)-v_{\ov j}(\ov y,\ov s)\geq 0
\end{eqnarray}
for $\b, \eta$ sufficiently small.

The following inequality
\begin{equation*}
    u_1(0,0)-v_1(0,0)  =\Psi(0,0,0,0,1)\le \Psi(\ov x,\ov y,\ov t,\ov s, \ov
       j)
\end{equation*}
and the boundedness of $u,v$ leads to the classical estimates (see \cite[Lemma 4.3]{barles94})
\begin{eqnarray}
&& \b(|\ov x|^2+|\ov y|^2), \frac{|\ov t-\ov s|^2}{2\mu}, \frac{|\ov x-\ov y|^2}{2\a}
\leq 2(|u|_\infty +|v|_\infty), \label{lab-borne} \\
&& \label{lab-lim}
\mathop{\rm lim}_{\mu \to 0} \frac{|\ov t-\ov s|^2}{2\mu}=0, \ \ \
\mathop{\rm lim}_{\b \to 0} \b(|\ov x|+|\ov y|)=0 \ \ \ {\rm and} \ \ \
\mathop{\rm lim}_{\a \to 0}\mathop{\rm lim\,sup}_{\mu, \b \to 0} \frac{|\ov x-\ov y|^2}{2\a} =0
\end{eqnarray}
we will need later.

Assume for a while that it is possible to extract some subsequences $\alpha,\beta, \mu\to 0$
such that
\begin{eqnarray}\label{abs34}
\ov t >0 \ {\rm and} \ \ov s >0.
\end{eqnarray}
It follows that we can write the viscosity inequalities for the
subsolution $u$ and the supersolution $v.$ Setting
$\ov p= {(\ov x -\ov y)}/{\a},$ we have
\begin{equation}\label{CP10}
\frac{ (\ov t -\ov s )}{\a}  +H_{\ov j}\left(\ov x ,\ov t, u(\ov x , \ov t),\ov p+2 \b\ov x\right)\le 0
\end{equation}
and
\begin{equation}\label{CP11}
\frac{ (\ov t  -\ov s )}{\a} +H_{\ov j}\left(\ov y,\ov s,v(\ov y,\ov s),\ov p-2\b \ov y\right)\ge 0.
\end{equation}
Subtracting \eqref{CP11} from \eqref{CP10}, we obtain
\begin{eqnarray} \label{fin111}
\hspace*{0.8cm} \eta \leq H_{\ov j}\left(\ov y,\ov s,v(\ov y,\ov s),\ov p-2\b \ov y\right)
-H_{\ov j}\left(\ov x ,\ov t, u(\ov x , \ov t),\ov p+2 \b\ov x\right)
= \mathcal{T}_1 +\mathcal{T}_2 +\mathcal{T}_3 +\mathcal{T}_4,
\end{eqnarray}
where
\begin{eqnarray*}
&& \mathcal{T}_1 =
 H_{\ov j}\left(\ov y,\ov s, v(\ov y,\ov s),\ov p-2\b \ov y\right)
-H_{\ov j}\left(\ov x,\ov s, v(\ov y,\ov s),\ov p-2\b \ov y\right), \\
&& \mathcal{T}_2 =
 H_{\ov j}\left(\ov x,\ov s, v(\ov y,\ov s),\ov p-2\b \ov y\right)
-H_{\ov j}\left(\ov x,\ov t, v(\ov y,\ov s),\ov p-2\b \ov y\right), \\
&& \mathcal{T}_3 =
 H_{\ov j}\left(\ov x,\ov t, v(\ov y,\ov s),\ov p-2\b \ov y\right)
-H_{\ov j}\left(\ov x,\ov t, v(\ov y,\ov s),\ov p+2\b \ov x\right), \\
&& \mathcal{T}_4 =
 H_{\ov j}\left(\ov x,\ov t, v(\ov y,\ov s),\ov p+2\b \ov x\right)
-H_{\ov j}\left(\ov x,\ov t, u(\ov x,\ov t),\ov p+2\b \ov x\right).
\end{eqnarray*}

From \eqref{lab-borne}, choosing $0<\a, \b <1$ and setting $M=\sqrt{2(|u|_\infty + |v|_\infty)}$
we have
\begin{eqnarray}\label{mie21}
\b |\ov x|, \b |\ov y|\leq M\sqrt{\beta}\leq M
\ \ \ {\rm and}  \ \ \ |\ov p|\leq \frac{M}{\sqrt{\a}}.
\end{eqnarray}

From \eqref{H1}iv), we have
\begin{eqnarray}
\mathcal{T}_1&\leq &\omega ((1+|\ov p|+ 2\beta|\ov y|)|\ov y-\ov x|)\nonumber \\
&\leq & \omega \big( (1+2M)M\sqrt{\a}+\frac{|\ov y-\ov x|^2}{\a}\big)\label{T111}
\end{eqnarray}

If $\a, \b$ are fixed, $\ov x, v(\ov y,\ov s), p-2\b \ov y$ are
bounded independently of $\mu$ by \eqref{lab-borne}. It follows that there
exists a modulus of continuity $\omega_{\a,\b,|v|_\infty,T}$ such that
\begin{eqnarray*}
\mathcal{T}_2\leq \omega_{\a,\b,|v|_\infty,,T} (|\ov s-\ov t|).
\end{eqnarray*}

By \eqref{mie21},
\begin{eqnarray*}
|\ov p-2\b \ov y|, |\ov p+2\b \ov x| \leq  \frac{M}{\sqrt{\a}}+2M
\end{eqnarray*}
and therefore, by \eqref{H1}iii), there exists a modulus of continuity
$\omega_{\a, |v|_\infty,T}$ such that
\begin{eqnarray*}
\mathcal{T}_3 \leq \omega_{\a, |v|_\infty,T}(\b (|\ov x|+|\ov y|))
\leq \omega_{\a, |v|_\infty,T}(2M\b ).
\end{eqnarray*}

The non classical term here is $\mathcal{T}_4$ for which we have to
use \eqref{A3} to deal with:
since $0\leq u_{\ov j}(\ov x,\ov t)-v_{\ov j}(\ov y,\ov s)= \mathop{\rm max}_{1\leq i\leq M}
\{u_i(\ov x,\ov t)-v_i(\ov y,\ov s)\}$ by definition of $\ov j$ and \eqref{CP2}, we obtain
\begin{eqnarray*}
 \mathcal{T}_4 \leq  0.
\end{eqnarray*}

Finally, \eqref{fin111} reads
\begin{eqnarray}\label{fin54}
\hspace*{1.5cm}\eta \leq \omega  \big( (1+2M)M\sqrt{\a}+\frac{|\ov y-\ov x|^2}{\a}\big)
+ \omega_{\a,\b,|v|_\infty,T} (|\ov s-\ov t|)
+ \omega_{\a, |v|_\infty,T}(\b (|\ov x|+|\ov y|)).
\end{eqnarray}
By \eqref{lab-lim}, we can take $\a$ small enough to have
$\omega(\cdots)\leq \eta/3.$ Then we choose successively
$\b$ and $\mu$ small enough to obtain a contradiction
in \eqref{fin54}.

Therefore, choosing
$\mu <\! < \b <\! < \a <\! < \eta$ small enough,
\eqref{abs34} does not hold and, for all extractions, one has for instance $\ov t=0.$
It follows that, for all $j$ and $x,y \in\R^N,$ $t\in [0,T],$ we have
\begin{eqnarray*}
u_j(x,t)-v_j(y,t)\leq \eta t + \frac{|x-y|^2}{2\a}+\b (|x|^2+|y|^2)
+ \left(u_{\ov j}(\ov x,0)-v_{\ov j} (\ov y,\ov s)\right)^+ - \frac{|\ov x-\ov y|^2}{2\a}.
\end{eqnarray*}
Sending $\mu\to 0$ and then $\b\to 0,$ we obtain, using \eqref{lab-lim},
\begin{eqnarray}\label{blabla12}
u_j(x,t)-v_j(y,t)
\leq
\eta t + \frac{|x-y|^2}{2\a}
+ \left(u_{\ov j}(\ov x,0)-v_{\ov j} (\ov y,0)\right)^+ - \frac{|\ov x-\ov y|^2}{2\a}.
\end{eqnarray}
But $u_{\ov j}(\ov x,0)-v_{\ov j} (\ov y,0)\leq u_{0, \ov j}(\ov x)-u_{0, \ov j}(\ov y)$
and by uniform continuity of the $u_{0,j}$'s, $j=1,\cdots,M,$ for all $\rho >0,$
there exists $C_{j,\rho}>0$ such that
\begin{eqnarray}\label{unif-cont11}
u_{0,j}(x)-u_{0,j}(y)\leq \rho + C_{j,\rho} |x-y|
\end{eqnarray}
and therefore
\begin{eqnarray}\label{unif-cont22}
u_{0,j}(x)-u_{0,j}(y)- \frac{|x-y|^2}{2\a}\leq \rho + C_{j,\rho} |x-y| -\frac{|x-y|^2}{2\a}
\leq \rho + \frac{1}{2}\a C_{j,\rho}^2
\end{eqnarray}
We fix $\rho >0$ and set $C_\rho = \mathop{\rm max}_{1\leq i\leq M} C_{i,\rho}/\sqrt{2}.$
Then \eqref{blabla12} becomes
\begin{eqnarray}\label{buc23}
u_j(x,t)-v_j(y,t)
\leq
\rho+\eta t +\a C_{\rho}^2 + \frac{|x-y|^2}{2\a}.
\end{eqnarray}

Using \eqref{lab-lim} and sending succesively $\a\to 0,$ $\eta \to 0,$ $\rho\to 0,$
we conlude that the comparison holds.

By classical Perron's method (see \cite{ik91b}), comparison implies the
existence of a continuous viscosity solution $u$ to \eqref{HJSa}.
Applying the comparison principle again, we obtain the uniqueness
of the solution.
\end{proof}

\begin{proof} [Proof of Proposition \ref{regBUC}]
We first prove that $u$ is bounded. Let
\begin{eqnarray*}
u^\pm (x,t)=(\pm |u_0|_\infty \pm Ct,\cdots , \pm |u_0|_\infty \pm Ct),
\end{eqnarray*}
where $C= C(H, |u_0|_\infty,T)$ is defined by
\begin{eqnarray} \label{def1C}
C:=\sup\left\{ \left|H_j\left(x,t,r,0\right)\right|: \, x\in\R^N,\,t\in [0,T],\,
|r|\le | u_0|_\infty,\,  \,1\leq j\leq M \right\}.
\end{eqnarray}
It suffices to prove that $u^+$ is a supersolution and $u^-$ a subsolution
of \eqref{HJSa}. Then, by the comparison principle of Proposition \ref{PrComp},
we get
\begin{eqnarray*}
u^-\leq u \leq u^+ \quad \R^N\times [0,T]
\end{eqnarray*}
and we obtain the global $L^\infty$ bound
\begin{eqnarray}\label{linfini}
|u|\leq |u_0|_\infty + C(H, |u_0|_\infty,T) T.
\end{eqnarray}
We only prove that $u^+$ is a supersolution, the proof for $u^-$
being similar. At first, $u^+$ satisfies clearly the initial condition.
Since $u^+$ is smooth, for all $j$ and $(x,t)\in \R^N\times (0,T),$
\begin{eqnarray*}
\frac{\partial u_j^+}{\partial t}
+H_j (x,t,u^+(x,t),Du_j^+(x,t))=
C+ H_j (x,t,u^+(x,t),0).
\end{eqnarray*}
But
\begin{eqnarray*}
\mathop{\rm max}_{1\leq k\leq M}\{u_k^+(x,t)-|u_0|_\infty \} = Ct\geq 0
\end{eqnarray*}
is achieved for every index $1\leq k\leq M.$
Therefore, from \eqref{A3}, for all $j,$
\begin{eqnarray*}
 H_j (x,t,u^+(x,t),0)\geq  H_j (x,t,(|u_0|_\infty, \cdots , |u_0|_\infty),0)\geq -C
\end{eqnarray*}
which proves the result.

We prove the uniform continuity of $u$ in the space variable
uniformly in time.
Repeating the proof of the comparison principle with $v=u,$
from \eqref{buc23}, we obtain for all $\rho, \eta >0,$
there exists $C_\rho$ such that, for all $j$, $x,y\in\R^N,$
and $t\in [0,T],$
\begin{eqnarray*}
u_j(x,t)-u_j(y,t)\leq \rho+\eta t +
\mathop{\rm inf}_{\alpha >0}\{ \a C_{\rho}^2 + \frac{|x-y|^2}{2\a}\}
\leq 2\rho+ \sqrt{2} C_\rho |x-y|,
\end{eqnarray*}
if we take $\eta$ such that $\eta T\leq \rho.$
This proves that there exists a modulus of continuity $\omega_{\rm sp}$
in space for $u$ which is independent of $t\in [0,T]:$
\begin{eqnarray*}
u_j(x,t)-u_j(y,t)\leq \omega_{\rm sp}(|x-y|)  \quad x,y\in\R^N,\ t\in [0,T].
\end{eqnarray*}

We continue by deducing a modulus of continuity in time (uniformly
in space). This result is classical in parabolic pdes. Here we adapt
the proof of \cite[Lemma 9.1]{bbl02}.
We want to prove that, for all $\rho >0,$ there exist positive constants
$C_\rho$ and $K_\rho$ such that, for all $j,$
$x,x_0\in\R^N$ with $|x-x_0|\leq 1$ and $0\leq t_0\leq t\leq T,$
\begin{eqnarray}\label{2nd-in}
u_j(x,t)-u_j(x_0,t_0)
\leq \rho+C_\rho|x-x_0|^2+K_\rho(t-t_0).
\end{eqnarray}
and
\begin{eqnarray}\label{3nd-in}
-\rho-C_\rho|x-x_0|^2-K_\rho(t-t_0)\leq u_j(x,t)-u_j(x_0,t_0).
\end{eqnarray}
We will prove only the first inequality, the proof of the second one being
analogous.
Since $x\in \ov B(x_0,1),$
taking
\begin{eqnarray*}
C_\rho\geq {2|u|_\infty},
\end{eqnarray*}
we are sure that \eqref{2nd-in} holds on $\partial B(x_0,1)\times [t_0,T]$
for every $\rho, K_\rho >0.$ It is worth noticing that $C_\rho$ depends only on
$|u|_\infty.$ Next we would like to ensure that \eqref{2nd-in} holds
in $\ov B(x_0,1)\times \{t_0\}.$ To this end, we argue by contradiction
assuming there exists $\rho >0$ such that, for every $C_\rho>0,$ there
exists $j$ and $y_{C_\rho}\in \ov B(x_0,1)$ with
\begin{eqnarray}\label{fg45}
u_j(y_{C_\rho},t_0)-u_j(x_0,t_0)>\rho +C_\rho |y_{C_\rho}-x_0|^2.
\end{eqnarray}
It follows
\begin{eqnarray*}
 |y_{C_\rho}-x_0|\leq \sqrt{\frac{2|u|_\infty}{C_\rho}} \to 0 \ \ {\rm as} \ \
 C_\rho \to +\infty.
\end{eqnarray*}
From \eqref{fg45}, we get
\begin{eqnarray*}
\omega_{\rm sp}(|y_{C_\rho}-x_0|)\geq u_j(y_{C_\rho},t_0)-u_j(x_0,t_0)>\rho
+C_\rho |y_{C_\rho}-x_0|^2\geq \rho,
\end{eqnarray*}
which leads to a contradiction for $C_\rho$ large enough. Note that
the choice of $C_\rho$ to obtain the contradiction depends only on
$\rho, |u|_\infty$ and $\omega_{\rm sp}.$
Finally we proved that, up to choose $C_\rho = C_\rho(\rho, |u|_\infty, \omega_{\rm sp})$
big enough, \eqref{2nd-in} holds on $(\partial B(x_0,1)\times [t_0,T])\cup (
\ov B(x_0,1)\times \{t_0\}).$

For all $1\leq j\leq M,$ we set
\begin{eqnarray*}
\chi_j (y,t):= u_j (x_0,t_0)+\rho + C_\rho|y-x_0|^2+ K_\rho(t-t_0) \quad
(y,t)\in\R^N\times [0,T]
\end{eqnarray*}
and $\chi = (\chi_1, \cdots , \chi_M).$ Note that $\chi$ is a smooth
function.
We claim that we can choose the constant $K_\rho$ big enough in order that
$\chi$ is a strict supersolution of \eqref{HJSa} in $B(x_0,1)\times (t_0,T).$
Indeed, for all $j,$ and $(y,t)\in B(x_0,1)\times (t_0,T),$
\begin{eqnarray}\label{visc09}
\frac{\partial \chi_j}{\partial t}
+H_j (y,t,\chi(y,t),D\chi_j(y,t))= K_\rho + H_j(y,t,\chi(y,t), 2C_\rho (y-x_0)).
\end{eqnarray}
But
\begin{eqnarray*}
\mathop{\rm max}_{1\leq k\leq M}\{\chi_k(y,t)-u_k(x_0,t_0) \} =
\rho + C_\rho|y-x_0|^2+ K_\rho(t-t_0)\geq 0
\end{eqnarray*}
is achieved for every index $1\leq k\leq M.$
Therefore, from \eqref{A3}, for all $j,$
\begin{eqnarray}\label{visc091}
H_j(y,t,\chi(y,t), 2C_\rho (y-x_0))\geq H_j(y,t, u(x_0,t_0), 2C_\rho (y-x_0)).
\end{eqnarray}
By \eqref{H1}iii),
\begin{eqnarray*}
M_{C_\rho, |u|_\infty} := {\rm inf}\{ H_j(y,t,r,p) : y\in\R^N, t\in [0,T], |r|\leq |u|_\infty, |p|\leq
2C_\rho, 1\leq j\leq M\}
\end{eqnarray*}
is finite. Taking
\begin{eqnarray*}
K_\rho > - M_{C_\rho, |u|_\infty},
\end{eqnarray*}
from \eqref{visc09} and \eqref{visc091}, we obtain, for all $j,$
\begin{eqnarray*}
\frac{\partial \chi_j}{\partial t}
+H_j (y,t,\chi(y,t),D\chi_j(y,t))>0 \quad (y,t)\in B(x_0,1)\times (t_0,T)
\end{eqnarray*}
which proves the claim.

From the very definition of viscosity solution, it follows that,
for all $j,$
$\displaystyle \mathop{\rm max}_{\ov B(x_0,1)\times (t_0,T)} \{u_j-\chi_j\}$
is necessarily achieved on the parabolic boundary of $B(x_0,1)\times (t_0,T)$
and therefore \eqref{2nd-in} holds in $\ov B(x_0,R)\times [t_0,T].$

From  \eqref{2nd-in} and  \eqref{3nd-in}, we obtain that, for all $\rho>0,$
$1\leq j\leq M,$ $x\in\R^N,$
and $t,s\in [0,T],$
\begin{eqnarray*}
|u_j(x,t)-u_j(x,s)|
\leq \rho+K_\rho|t-s|
\end{eqnarray*}
and $K_\rho$ is independent of $x.$ This proves the existence of a modulus of
continuity $\omega_{\rm tm}$ in time which is independent of $x\in\R^N.$
\end{proof}


\end{document}